\numberwithin{equation}{section}
\newtheorem{theorem}{Theorem}[section] 
\newtheorem*{theorem*}{Theorem}
\newtheorem{lemma}[theorem]{Lemma}
\newtheorem{proposition}[theorem]{Proposition}
\newtheorem{corollary}[theorem]{Corollary}
\theoremstyle{definition}
\newtheorem*{acknowledgements}{Acknowledgements}
\newtheorem{remark}[theorem]{Remark}
\newtheorem{definition}[theorem]{Definition}
\newtheorem{example}[theorem]{Example}
\renewcommand{\phi}{\varphi}
\renewcommand{\leq}{\leqslant}
\renewcommand{\geq}{\geqslant}
\renewcommand{\bar}{\overline}
\newcommand{\x}{\mathbf{x}}
\renewcommand{\t}{\mathbf{t}}
\renewcommand{\a}{\mathbf{a}}
 \DeclareMathOperator{\disc}{disc}
\DeclareMathOperator{\rad}{rad}
\DeclareMathOperator{\Pic}{Pic}
\DeclareMathOperator{\Spec}{Spec}
\DeclareMathOperator{\Jac}{Jac}
\DeclareSymbolFont{bbold}{U}{bbold}{m}{n}
\DeclareSymbolFontAlphabet{\mathbbold}{bbold}
\renewcommand{\P}{\mathbb{P}}
\newcommand{\Q}{\mathbb{Q}} 
\newcommand{\F}{\mathbb{F}}
\newcommand{\N}{\mathbb{N}}
\newcommand{\R}{\mathbb{R}}
\newcommand{\Z}{\mathbb{Z}}
\newcommand{\A}{\mathbb{A}}
\renewcommand{\epsilon}{\varepsilon}
\renewcommand{\leq}{\leqslant}
\renewcommand{\geq}{\geqslant}
\title{Normal distribution of bad reduction}
\author{Robert J. Lemke Oliver}
\address{ 
Department of Mathematics \\
Tufts University \\
Medford, MA \\
02155 \\
USA}
\email{robert.lemke\_{}oliver@tufts.edu}
\author{Daniel Loughran} 
  \address{
  Department of Mathematical Sciences \\
University of Bath \\
Claverton Down \\
Bath \\
BA2 7AY \\
UK}
\urladdr{https://sites.google.com/site/danielloughran}
\author{Ari Shnidman} 
\address{
Einstein Institute of Mathematics\\
Hebrew University of Jerusalem\\
Israel
}
\email{ariel.shnidman@mail.huji.ac.il}
 \subjclass[2010]
{11G30; 
60F05, 
}
\date{\today}
\begin{document}

\begin{abstract}
	We prove normal distribution laws for primes of bad semistable reduction in families of curves. As a consequence, we deduce that when ordered by height, $100\%$ of curves in these families  have, in a precise sense, many such primes.
\end{abstract}

\maketitle

\setcounter{tocdepth}{1}
\tableofcontents

\vspace{-1,5cm}

\section{Introduction}
A famous theorem of Erd\H{o}s and Kac \cite{EK40} states that the function $\omega(n) = \#\{\mbox{primes } p \colon p \mid n\}$ behaves like a normal distribution with mean and variance $\log \log n$; more precisely the random variables
$$\{ n \in \N : n \leq B\}  \to \R, \quad n \mapsto \frac{\omega(n) - \log \log B}{\sqrt{\log \log B}}$$
converge in distribution to the standard normal distribution (throughout the paper all finite sets are equipped with the uniform probability measure).

In this paper we prove versions of this result for bad reduction types in families of curves. For applications, one often wants to detect finer arithmetic information than just bad reduction, such as whether the reduction is semistable.  We show that the primes of bad semistable reduction obey an Erd\H{o}s--Kac type theorem. 

\begin{theorem*} \hfill
\begin{enumerate}
	\item Over the set of hyperelliptic curves of fixed genus, the (renormalised) number of primes of bad semistable reduction is normally distributed.
	\item Over the set of plane curves of fixed degree, the (renormalised) number of primes of bad semistable reduction is normally distributed.
\end{enumerate}
\end{theorem*}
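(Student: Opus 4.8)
The plan is to deduce both parts in parallel from an Erd\H{o}s--Kac law for the number of prime divisors of a single discriminant polynomial, proved by the classical method of moments. Fix one of the two families and parametrise it by integral coefficient vectors $\mathbf{a}$ ranging over a box $\mathcal{B}(B)\subset\Z^{n}$, scaled so that $\#\mathcal{B}(B)\asymp B$ counts the curves of height at most $B$; write $C_{\mathbf{a}}$ for the associated curve and $\Delta(\mathbf{a})\in\Z$ for the discriminant of its defining equation (the discriminant of the relevant binary form in the hyperelliptic case, of the plane curve in the second). The first and most delicate step is local: for all but finitely many primes $p$, and outside a set of $\mathbf{a}$ of $p$-adic density $O(p^{-2})$, the curve $C_{\mathbf{a}}$ has bad semistable reduction at $p$ exactly when $v_{p}(\Delta(\mathbf{a}))=1$. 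The easy direction is that $v_{p}(\Delta(\mathbf{a}))=1$ forces the reduction modulo $p$ to acquire a single ordinary double point and be otherwise smooth, so the arithmetic surface is regular with nodal special fibre and the reduction is semistable; the other direction needs that bad reduction forces $p\mid\Delta(\mathbf{a})$, and that all reduction types with $v_{p}(\Delta(\mathbf{a}))\ge 2$, as well as any genuinely bad behaviour ``at infinity'', are cut out by congruences modulo $p^{2}$ of density $O(p^{-2})$. Together with a squarefree sieve for the values of $\Delta$ this yields
\[
\omega_{\mathrm{ss}}(C_{\mathbf{a}})=\omega\big(\Delta(\mathbf{a})\big)+E(\mathbf{a}),\qquad \frac{1}{\#\mathcal{B}(B)}\sum_{\mathbf{a}\in\mathcal{B}(B)}|E(\mathbf{a})|=O(1),
\]
where $\omega_{\mathrm{ss}}(C)$ counts the primes of bad semistable reduction of $C$; by Markov's inequality $E(\mathbf{a})=o(\sqrt{\log\log B})$ for a proportion $1-o(1)$ of $\mathbf{a}\in\mathcal{B}(B)$, so it suffices to prove that $\big(\omega(\Delta(\mathbf{a}))-\log\log B\big)/\sqrt{\log\log B}$ converges in distribution to the standard normal, the stated consequence about $100\%$ of curves being then immediate.

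Next I would establish this Erd\H{o}s--Kac law for $\omega(\Delta(\mathbf{a}))$ by the method of moments. The hypersurface $\{\Delta=0\}\subset\A^{n}$ is geometrically irreducible (as is classical for discriminants), so by Lang--Weil $\rho_{\Delta}(p):=p^{-n}\#\{\mathbf{a}\bmod p:\Delta(\mathbf{a})\equiv 0\}=p^{-1}+O(p^{-3/2})$, hence $\sum_{p\le x}\rho_{\Delta}(p)=\log\log x+O(1)$; since $|\Delta(\mathbf{a})|\ll B^{O(1)}$ on $\mathcal{B}(B)$, both the mean and the variance of $\omega(\Delta(\mathbf{a}))$ are $\log\log B+O(1)$. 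Truncating at $z=B^{1/\log\log B}$ costs only $\sum_{z<p\le B^{O(1)}}\rho_{\Delta}(p)=O(\log\log\log B)=o(\sqrt{\log\log B})$, so one may work with $\omega^{\le z}(\mathbf{a})=\#\{p\le z:p\mid\Delta(\mathbf{a})\}$. The key input is the uniform lattice-point estimate: for squarefree $q=p_{1}\cdots p_{r}\le z^{k}=B^{o(1)}$, the Chinese Remainder Theorem and counting of integer points in $\mathcal{B}(B)$ give
\[
\P_{\mathbf{a}\in\mathcal{B}(B)}\big[\,p_{i}\mid\Delta(\mathbf{a})\ \text{for all }i\,\big]=\prod_{i=1}^{r}\rho_{\Delta}(p_{i})+O\big(q\,B^{-1/n}\big),
\]
an error that is $o(1)$ and, crucially, summable over all such $q$. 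Expanding the $k$-th centred moment of $\omega^{\le z}$ over $k$-tuples of primes $\le z$: the tuples with a repeated prime contribute $o(\sigma_{B}^{k})$ exactly as in the classical computation (using $\rho_{\Delta}(p)=O(1/p)$), the off-diagonal error terms sum to $o(\#\mathcal{B}(B)\,\sigma_{B}^{k})$ by the estimate above, and the off-diagonal main term reproduces the $k$-th moment of a sum of independent $\mathrm{Bernoulli}(\rho_{\Delta}(p))$ variables over $p\le z$. Dividing by $\sigma_{B}=\big(\sum_{p\le z}\rho_{\Delta}(p)(1-\rho_{\Delta}(p))\big)^{1/2}\sim\sqrt{\log\log B}$ and letting $B\to\infty$, these moments converge to those of $\mathcal{N}(0,1)$; since the normal law is determined by its moments, the Fr\'echet--Shohat theorem gives convergence in distribution.

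The probabilistic half of the second step is essentially the original Erd\H{o}s--Kac argument and presents no difficulty; the real obstacle is the local analysis of the first step, the arithmetic geometry of degenerations. One must describe bad semistable reduction $p$-adically precisely enough to see that every deviation from ``$v_{p}(\Delta)=1$'' is governed by congruences of density $O(p^{-2})$ \emph{uniformly in $p$}: in the hyperelliptic case this means controlling the possibly non-minimal model $y^{2}=f(x)$, both types of behaviour at infinity according to the parity of $\deg f$, and the reduction types with $v_{p}(\Delta)\ge 2$; in the plane-curve case it additionally requires showing that a single node on the reduction already forces regularity of the arithmetic surface, hence semistability. A secondary, more routine point---should a suitable multivariate Erd\H{o}s--Kac statement for $\omega(\Delta(\mathbf{a}))$ not be available off the shelf---is to run the sieve with error terms uniform enough to survive summation over moduli as large as $B^{o(1)}$, and to supply the squarefree sieve for $\Delta$ used to bound $E(\mathbf{a})$ on average; these are standard but demand some care about the shape of $\mathcal{B}(B)$.
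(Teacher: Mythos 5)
Your proposal follows essentially the same route as the paper: both reduce the count of primes of bad semistable reduction to the condition $v_p(\Delta(\mathbf{a}))=1$ (the forward implication via the regular-model-with-one-node analysis of Poonen--Stoll resp.\ \cite[8.3.53]{Liu}, the reverse via the observation that the exceptional congruence classes modulo $p^2$ have density $O(p^{-2})$), and both then prove an Erd\H{o}s--Kac law for $\omega(\Delta(\mathbf{a}))$ by the method of moments --- the paper by citing \cite[Thm.~1.9]{EKM} inside a general effective-equidistribution framework and absorbing the corrections through the moment bounds of Lemmas \ref{lem:codim_2} and \ref{lem:transverse}, you by redoing the moment computation directly and absorbing the correction through a first-moment bound plus Markov, which indeed suffices. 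One caution: the ``squarefree sieve for the values of $\Delta$'' you invoke is both unnecessary and, for discriminants of large degree relative to the number of variables, not known unconditionally; what is actually needed is only that $\frac{1}{\#\mathcal{B}(B)}\sum_{\mathbf{a}}\#\{p : v_p(\Delta(\mathbf{a}))\geq 2\}=O(1)$, which follows from equidistribution modulo $p^2$ for $p\leq B^{\delta}$ together with the trivial bound $\#\{p>B^{\delta} : p^2\mid \Delta(\mathbf{a})\}\ll \log|\Delta(\mathbf{a})|/\log B=O(1)$ --- this trivial tail bound is exactly how the paper's Lemma \ref{lem:codim_2} handles large primes.
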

Our methods, which come from the paper \cite{EKM}, are robust enough to allow for more general families of curves under suitable assumptions. See Sections \ref{sec:hyperelliptic} and \ref{sec:plane_curves} for precise statements and further details. An immediate application of our results is the following: for any given $N > 0$, one hundred percent of degree $d$ hyperelliptic (resp.\ plane) curves $C$ have at least $N$ primes $p$ of bad semistable reduction. The case $N = 1$ for hyperelliptic curves is due to Van Bommel \cite{vB14}. In the text below, we prove a more precise  result, insisting that the reduction is semistable and irreducible over $\F_p$ with exactly one node, and hence that the Tamagawa number $c_p(\mathrm{Jac}(C))$ is equal to $1$.

\begin{acknowledgements}
Robert Lemke Oliver is supported by the National Science Foundation grant DMS-2200760 and by a Simons Fellowship in Mathematics.  Daniel Loughran is supported by  UKRI Future Leaders Fellowship MR/V021362/1. Ari Shnidman is funded by the European Union (ERC, CurveArithmetic, 101078157) and the Israel Science Foundation (grant No. 2301/20).
\end{acknowledgements}

\section{An Erd\H{o}s--Kac type result}\label{sec:EK}
We prove an Erd\H{o}s--Kac type theorem as an application of the main result of \cite{EKM}. We begin by recalling the set up of \cite[Thm.~1.8]{EKM}.

Let $X \subset \P^N_\Q$ be a quasi-projective variety over $\Q$. The naive height on projective space induces a height function $H: X(\Q) \to \R_{>0}$. Let $\mathcal{X}$ be a choice of model for $X$ over $\Z$.  The model allows us to define the set of integral points $\mathcal{X}(\Z)$.

We assume that $\mathcal{X}$ and the height $H$ satisfy the following properties. There exists a bound $A > 0$ and constants $M, \eta > 0$ such that for $Q \in \N$ with $\gcd(Q,\prod_{p \leq  A}p) = 1$ and for $\Upsilon \subset \mathcal{X}(\Z/Q\Z)$, we have 
\begin{equation} 
\label
{eqn:eff_equ}
\frac{\#\{ x \in \mathcal{X}(\Z): H(x) \leq B, x \bmod Q \in \Upsilon\}}
{\#\{ x \in \mathcal{X}(\Z): H(x) \leq B\}}
= \frac{\#\Upsilon}{\# \mathcal{X}(\Z/Q\Z)} + O(Q^{M} B^{-\eta})
\end{equation}
as $B \to \infty$. This condition is called \emph{effective equidistribution}, as it gives equidistribution in congruence classes with an explicit error term.

Let $\mathcal{Z}_1, \mathcal{Z}_2 \subset \mathcal{X}$ be closed subschemes. For $x \in \mathcal{X}(\Z)\setminus \mathcal{Z}_1(\Z)$, we define
\begin{equation} \label{def:omega_Z}
\omega_{\mathcal{Z}_1\setminus \mathcal{Z}_2}(x) = \#\{ p : x \bmod p \in \mathcal{Z}_1(\F_p) \setminus \mathcal{Z}_2(\F_p)\}.
\end{equation}
The condition $x \notin \mathcal{Z}_1(\Z)$ is easily seen to imply that the number of such primes is finite, hence this is well-defined. We also consider a multiplicity $1$ version of this. Namely for a Cartier divisor $\mathcal{D} \subset \mathcal{X}$ and closed subscheme $Z \subset \mathcal{X}$, for $x \in \mathcal{X}(\Z)\setminus \mathcal{D}(\Z)$ we define
\begin{equation} \label{def:omega_1}
\omega^1_{\mathcal{D} \setminus \mathcal{Z}}(x) = \#\{ p : x \text{ meets $\mathcal{D} \bmod p$ transversely
outside of } \mathcal{Z}\}.
\end{equation}
This condition means the following: Firstly that $x \bmod p \in  \mathcal{D}(\F_p) \setminus \mathcal{Z}(\F_p)$. Secondly, if $f = 0$ is a local equation for $\mathcal{D}$ around $x$, then $v_p(f(x))) = 1$.

\begin{theorem} 
\label
{thm:main}
	Let $X \subset \P^N_\Q$ be a normal quasi-projective variety  with induced height function $H$  and  $\mathcal{X}$ a choice of model for $X$ over $\Z$ which satisfy \eqref{eqn:eff_equ}. 
	Let $D_1,D_2 \subset X$ be reduced effective Cartier divisors with $D_1 \neq 0$
	and $D_1 \not \subseteq D_2$.
	Let $\mathcal{D}_i$ be their closures in $\mathcal{X}$. Let $c_{D_1 \setminus D_2}$ denote the number of 
	irreducible components of $D_1$ not contained in $D_2$. Then as $B \to \infty$, the random variables
	$$ \{x \in \mathcal{X}(\Z) \setminus \mathcal{D}_1(\Z) : H(x) \leq B\}  \to \R, \quad x \mapsto 
	\frac{\omega_{\mathcal{D}_1 \setminus \mathcal{D}_2}(x) - c_{D_1 \setminus D_2} \log \log B}{\sqrt{c_{D_1 \setminus D_2}\log \log B}}$$
	$$ \{x \in \mathcal{X}(\Z) \setminus \mathcal{D}_1(\Z) : H(x) \leq B\}  \to \R, \quad x \mapsto 
	\frac{\omega^1_{\mathcal{D}_1 \setminus \mathcal{D}_2}(x) - c_{D_1 \setminus D_2} \log \log B}{\sqrt{c_{D_1 \setminus D_2}\log \log B}}$$
	converge in distribution to a standard normal as $B \to \infty$.
\end{theorem}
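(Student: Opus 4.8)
The plan is to deduce Theorem~\ref{thm:main} from the main result of \cite{EKM} (namely \cite[Thm.~1.8]{EKM}), so the work is essentially one of translation: checking that the hypotheses of that theorem are met in our setting, and that its conclusion gives precisely the two normal distribution laws stated here. The key structural input is the effective equidistribution hypothesis \eqref{eqn:eff_equ}, which we have assumed outright, so the remaining issue is to package the divisor data $(D_1, D_2)$ correctly. First I would record that since $D_1, D_2$ are reduced effective Cartier divisors on the normal variety $X$, for all but finitely many primes $p$ the reductions $\mathcal{D}_1, \mathcal{D}_2$ behave well modulo $p$ (e.g.\ $\mathcal{D}_1 \bmod p$ is a reduced Cartier divisor in $\mathcal{X}_{\F_p}$, of the expected dimension, with the right number of geometric components), so that the local densities
$$ \rho_p := \frac{\#\big(\mathcal{D}_1(\F_p) \setminus \mathcal{D}_2(\F_p)\big)}{\#\mathcal{X}(\F_p)} $$
are well-defined and, by Lang--Weil estimates applied to each irreducible component of $D_1 \setminus D_2$, satisfy $\rho_p = c_{D_1 \setminus D_2}/p + O(p^{-3/2})$ as $p \to \infty$. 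The analogous statement holds for the transversality-with-multiplicity-one count $\omega^1_{\mathcal{D}_1 \setminus \mathcal{D}_2}$: the set of $x \in \mathcal{X}(\Z/p^2\Z)$ whose reduction meets $\mathcal{D}_1$ transversely mod $p$ outside $\mathcal{Z}$ also has density $c_{D_1 \setminus D_2}/p + O(p^{-3/2})$, because imposing $v_p(f(x)) = 1$ cuts the naive ``$x \bmod p \in \mathcal{D}_1$'' locus down by a factor $1 - 1/p$ at each smooth point. These are exactly the hypotheses that \cite[Thm.~1.8]{EKM} requires on the arithmetic function being studied: a sum of local indicator functions whose $p$-th mean is $c/p + O(p^{-3/2})$ with $c = c_{D_1 \setminus D_2}$.

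Next I would invoke \cite[Thm.~1.8]{EKM} directly. That theorem asserts that, under \eqref{eqn:eff_equ} together with the density estimates above, the counting functions $\omega_{\mathcal{D}_1 \setminus \mathcal{D}_2}$ and $\omega^1_{\mathcal{D}_1 \setminus \mathcal{D}_2}$, each viewed as a random variable on $\{x \in \mathcal{X}(\Z) \setminus \mathcal{D}_1(\Z) : H(x) \leq B\}$ with the uniform measure, have mean $\sim c_{D_1 \setminus D_2}\log\log B$ and variance $\sim c_{D_1 \setminus D_2}\log\log B$, and that after the stated renormalisation they converge in distribution to a standard normal. The role of $c_{D_1 \setminus D_2}$ is precisely that $\sum_{p \leq y} \rho_p = c_{D_1 \setminus D_2}\log\log y + O(1)$, which is the analogue of Mertens's theorem governing the Erd\H{o}s--Kac normalisation; the hypothesis $D_1 \neq 0$ guarantees $c_{D_1 \setminus D_2} \geq 1$ so the variance genuinely grows, and $D_1 \not\subseteq D_2$ guarantees $c_{D_1 \setminus D_2} \geq 1$ as well (at least one component survives), so both renormalised variables are well-defined for $B$ large. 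The condition $\gcd(Q, \prod_{p \leq A} p) = 1$ in \eqref{eqn:eff_equ} is harmless: only the finitely many primes $\leq A$ (plus the finitely many ``bad'' primes for the divisors) are excluded, and omitting finitely many primes changes $\omega$ by a bounded amount, which does not affect convergence in distribution after renormalisation.

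The main obstacle, and the only place real care is needed, is verifying the uniform Lang--Weil-type estimate for $\rho_p$ (and its multiplicity-one variant) with an error term that is summable after squaring, i.e.\ $O(p^{-3/2})$ rather than merely $o(1/p)$ — this is what makes the sum $\sum_p (\rho_p - c/p)$ and $\sum_p \rho_p(1-\rho_p)$ converge, which is needed to pin down the variance constant and to run the method of moments in \cite{EKM}. For this one uses that $D_1$ is a Cartier divisor on a \emph{normal} variety: normality ensures $D_1$ has no embedded components and its singular locus has codimension $\geq 2$ in $D_1$, so the smooth locus of each component of $D_1 \setminus D_2$ contributes $\#\mathrm{(component)}(\F_p) = p^{\dim X - 1} + O(p^{\dim X - 3/2})$ points by Lang--Weil, while the singular locus and the intersection with $\mathcal{D}_2$ and $\mathcal{Z}$ contribute only $O(p^{\dim X - 2})$ points, hence $O(1/p^2)$ to the density. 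Assembling these bounds, dividing by $\#\mathcal{X}(\F_p) = p^{\dim X} + O(p^{\dim X - 1/2})$, and summing the geometric contributions of the $c_{D_1 \setminus D_2}$ surviving components gives the claimed estimate; one then feeds this, together with \eqref{eqn:eff_equ}, into \cite[Thm.~1.8]{EKM} to conclude. For the $\omega^1$ statement the only extra point is to check that ``meeting $\mathcal{D}_1$ transversely mod $p$'' is detected by a congruence condition modulo $p^2$ (it is: it is the condition $v_p(f(x)) = 1$ on a local equation $f$), so \eqref{eqn:eff_equ} applied with $Q = p^2$ still governs it; apart from that the analysis is identical.
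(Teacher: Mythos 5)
There is a genuine gap: your argument rests on reading \cite[Thm.~1.8]{EKM} as a black box that accepts an arbitrary family of local conditions whose density at $p$ is $c/p + O(p^{-3/2})$, but that result (as the paper uses it, via \cite[Thm.~1.9]{EKM}) only yields the central limit theorem for the specific function $\omega_{\mathcal{D}}(x)=\#\{p: x\bmod p\in\mathcal{D}(\F_p)\}$ attached to a divisor $\mathcal{D}$. Neither the condition ``$x\bmod p\in\mathcal{D}_1(\F_p)\setminus\mathcal{D}_2(\F_p)$'' nor, a fortiori, the transversality condition (which lives modulo $p^2$) is of this form; indeed the paper states explicitly that the entire content of Theorem~\ref{thm:main} beyond the citation is to show that imposing these extra conditions does not change the limit law. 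Your proposal computes the correct local densities but never supplies the mechanism by which they enter a CLT: the Erd\H{o}s--Kac machinery (moment computations over $k$-tuples of primes combined with \eqref{eqn:eff_equ}) would have to be rerun for these more general conditions, and you have not done that.

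What the paper actually does is a comparison argument. It writes $D_1\cap D_2 = E\sqcup Z$ with $E$ a divisor and $Z$ of codimension $2$, so that $\omega_{\mathcal{D}_1\setminus\mathcal{D}_2} = \omega_{\mathcal{D}_1} - \omega_{\mathcal{E}} - \omega_{\mathcal{Z}} + O(1)$ and $c_{D_1} = c_{D_1\setminus D_2} + c_E$; the divisor terms are handled by \cite[Thm.~1.9]{EKM}, and one must then show that $\omega_{\mathcal{Z}}(x)/\sqrt{\log\log B}$ converges in distribution to $0$. This is Lemma~\ref{lem:codim_2}: all moments of $\omega_{\mathcal{Z}}$ stay bounded as $B\to\infty$, proved by truncating at $p\le y=B^{\eta/kM}$, opening the $k$-th power, and applying \eqref{eqn:eff_equ} together with the Lang--Weil bound $\#\mathcal{Z}(\F_p)\ll p^{n-2}$. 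For the second statement one further needs Lemma~\ref{lem:transverse}: the moments of $\omega_{\mathcal{D}_1\setminus\mathcal{D}_2}-\omega^1_{\mathcal{D}_1\setminus\mathcal{D}_2}$ are bounded, which uses \cite[Cor.~2.4]{BL19} to show that the locus in $\mathcal{X}(\Z/p^2\Z)$ of non-transverse intersection at a smooth point has density $O(1/p^2)$, plus the codimension-$2$ lemma for the singular locus (this is where normality of $X$ and reducedness of $D_1$ are actually used). Your sketch contains the right geometric ingredients --- Lang--Weil on the components, the codimension-$2$ singular locus, the mod-$p^2$ nature of transversality --- but the step converting them into ``these corrections are negligible after dividing by $\sqrt{\log\log B}$'' is missing, and that step is precisely the part of the proof that is new relative to \cite{EKM}.
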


We will prove this by applying the result \cite[Thm.~1.9]{EKM}. This concerns the function
	$$
	\omega_{\mathcal{D}_1}(x) = \#\{ p : x \bmod p \in \mathcal{D}_1(\F_p)\},
	$$
	and shows that 
	$$\frac{\omega_{\mathcal{D}_1}(x) - c_{D_1} \log \log B}{\sqrt{c_{D_1}\log \log B}}$$
	converges in distribution to a standard normal, where $c_{D_1}$ 
	denotes the number of irreducible components of $D_1$.  Thus, the new pieces in Theorem \ref{thm:main} amount simply to showing that the imposition of the further conditions defining $\omega_{\mathcal{D}_1\setminus \mathcal{D}_2}$ and $\omega^1_{\mathcal{D}_1\setminus \mathcal{D}_2}$ do not affect the limiting distribution compared to the divisor $D_1$.  We do so by establishing in the following pair of lemmas that the number of primes at which these definitions possibly differ have bounded moments.  Consequently, after accounting for the normalizing factor $\sqrt{\log\log B}$ that tends to infinity, these primes will have no impact on the limiting distribution.
	
\begin{lemma} \label{lem:codim_2}
	Let $\mathcal{Z} \subset \mathcal{X}$ be closed of  codimension $2$. Then for each integer $k \geq 1$,
	$$
		\limsup_{B \to \infty} \frac{1}{\#\{x \in \mathcal{X}(\Z) : H(x) \leq B\}} \sum_{\substack{x \in \mathcal{X}(\Z) \setminus \mathcal{Z}(\Z) \\ H(x) \leq B}} \omega_{\mathcal{Z}}(x)^k
	$$
	exists.
\end{lemma}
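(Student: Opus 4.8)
The plan is to prove the stronger quantitative statement that, writing $N(B) = \#\{x \in \mathcal{X}(\Z) : H(x) \le B\}$, the $k$-th moment
\[
 M_k(B) := \frac{1}{N(B)} \sum_{\substack{x \in \mathcal{X}(\Z) \setminus \mathcal{Z}(\Z) \\ H(x) \le B}} \omega_{\mathcal{Z}}(x)^k
\]
is bounded above by a constant depending only on $k$; since $M_k(B) \ge 0$, this forces $\limsup_{B\to\infty} M_k(B)$ to be finite, which is exactly the assertion. (A small elaboration of the same bookkeeping would identify the limit itself, but we will not need this.) Two inputs drive the argument: the convergence of $\sum_p p^{-2}$, which is where the codimension-$2$ hypothesis enters; and the observation that every prime counted by $\omega_{\mathcal{Z}}(x)$ divides a single nonzero integer of size $B^{O(1)}$, which controls the large primes, for which effective equidistribution is useless.

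First I would fix homogeneous $g_1, \dots, g_m \in \Z[x_0, \dots, x_N]$ cutting out $\mathcal{Z}$ inside $\mathcal{X}$, and set $G(x) = \gcd_i(g_i(x))$. If $x \bmod p \in \mathcal{Z}(\F_p)$ then $p \mid g_i(x)$ for all $i$, hence $p \mid G(x)$; and since $x \notin \mathcal{Z}(\Z)$ we have $g_{i_0}(x) \ne 0$ for some $i_0$, so $G(x)$ is a positive integer with $1 \le G(x) \le |g_{i_0}(x)| \le B^{C_1}$ for $B$ large, where $C_1$ depends only on the degrees of the $g_i$ and on the height. Consequently, for any threshold $z$, the number of primes $p > z$ counted by $\omega_{\mathcal{Z}}(x)$ is at most $\log_z G(x) \le C_1 \log B / \log z$. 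Now set $z = B^{\epsilon_0}$ with $\epsilon_0 = \eta/(2k(M+1))$ and split $\omega_{\mathcal{Z}} = \omega^{\le A} + \omega^{(A,z]} + \omega^{>z}$ according to the size of the prime. Then $\omega^{\le A} \le \pi(A) = O(1)$, and the previous bound gives $\omega^{>z}(x) \le C_1/\epsilon_0 = O_k(1)$ for every $x$ in the range of summation. Using $(u+v+w)^k \le 3^{k-1}(u^k + v^k + w^k)$ for $u,v,w \ge 0$ it follows that
\[
 M_k(B) \;\le\; 3^{k-1}\bigl(\pi(A)^k + (C_1/\epsilon_0)^k\bigr) + \frac{3^{k-1}}{N(B)} \sum_{\substack{x \in \mathcal{X}(\Z)\setminus\mathcal{Z}(\Z) \\ H(x) \le B}} \omega^{(A,z]}(x)^k,
\]
so it remains only to bound the $k$-th moment of $\omega^{(A,z]}$.

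For that, I would expand $\omega^{(A,z]}(x)^k$ as a sum over ordered $k$-tuples of primes in $(A,z]$, and group the tuples according to their underlying set $\{q_1, \dots, q_j\}$ of $j \le k$ distinct primes; this produces a Stirling factor $j!\,S(k,j)$ and replaces the product of indicators by the single congruence condition $x \bmod Q \in \mathcal{Z}(\Z/Q\Z)$, where $Q = q_1 \cdots q_j$ is squarefree and coprime to $\prod_{p \le A} p$. Summing over $x$ and applying the effective equidistribution hypothesis \eqref{eqn:eff_equ} with $\Upsilon = \mathcal{Z}(\Z/Q\Z)$, the inner count becomes $N(B)\prod_{l=1}^j \#\mathcal{Z}(\F_{q_l})/\#\mathcal{X}(\F_{q_l}) + O\bigl(N(B)\,Q^M B^{-\eta}\bigr)$ with absolute implied constant. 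After replacing $A$ by a larger constant if necessary (which only strengthens the coprimality condition in \eqref{eqn:eff_equ}), so that for $p > A$ one has $\#\mathcal{X}(\F_p) \gg p^{\dim X}$ and $\#\mathcal{Z}(\F_p) \ll p^{\dim X - 2}$ (the latter because $\mathcal{Z}$ has codimension $2$, so its fibres have dimension at most $\dim X - 2$ outside a finite set of primes), we get $\#\mathcal{Z}(\F_p)/\#\mathcal{X}(\F_p) \ll p^{-2}$ uniformly. Summing the main term over all $j$-subsets of the primes in $(A,z]$ then gives at most $\tfrac{1}{j!}\bigl(C\sum_p p^{-2}\bigr)^j$ for a constant $C$, and summing over $j \le k$ a constant depending only on $k$ — this finiteness is exactly where $\sum_p p^{-2} < \infty$ is used. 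For the error term there are at most $z^j$ subsets and $Q^M \le z^{jM}$, so the total error is $\ll_k z^{k(M+1)} B^{-\eta} = B^{-\eta/2} \to 0$ by the choice of $\epsilon_0$. Altogether the $k$-th moment of $\omega^{(A,z]}$ is $O_k(1)$, hence $M_k(B) = O_k(1)$, which completes the argument.

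The step I expect to be the main obstacle is the treatment of the large primes. Effective equidistribution \eqref{eqn:eff_equ} is only informative for moduli $Q$ up to about $B^{\eta/M}$ and says nothing when $Q$ is a large power of $B$, which is the range in which the primes dividing $G(x)$ can lie. The way around this is the elementary divisibility bound of the second paragraph, which uses nothing more than the hypothesis $x \notin \mathcal{Z}(\Z)$ to guarantee that $G(x)$ is a nonzero integer of polynomially bounded size; the codimension-$2$ hypothesis then enters only through the convergent series $\sum_p p^{-2}$ for the remaining (small and medium) primes. A secondary technical point is making the fibrewise point counts $\#\mathcal{X}(\F_p) \gg p^{\dim X}$ and $\#\mathcal{Z}(\F_p) \ll p^{\dim X - 2}$ uniform in $p$, which is achieved by absorbing the finitely many exceptional primes into the constant $A$.
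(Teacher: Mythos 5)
Your argument is correct and follows essentially the same route as the paper: truncate the prime count at $y=B^{\epsilon_0}$ with $\epsilon_0\asymp\eta/(kM)$, bound the truncated moment via effective equidistribution together with the Lang--Weil estimates and the convergence of $\sum_p p^{-2}$, and control the primes beyond the truncation by the elementary observation that they all divide a single nonzero integer of size $B^{O(1)}$, so there are only $O_k(1)$ of them. The only differences are bookkeeping (Stirling-number grouping versus summing over tuples with $\mathrm{lcm}(p_1,\dots,p_k)^{-2}$, and your explicit splitting off of the primes $p\le A$ to respect the coprimality condition in \eqref{eqn:eff_equ}, a point the paper's proof leaves implicit).
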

\begin{proof}
	We begin by considering for any $x \in \mathcal{X}(\Z) \setminus \mathcal{Z}(\Z)$ and any $y \geq 1$ the moments of the related function
		\[
			\omega_{\mathcal{Z},y}(x) := \#\{ p \leq y : x \bmod p \in \mathcal{Z}(\F_p) \}.
		\]
	Changing the order of summation, we find
		\[
			\sum_{\substack{x \in \mathcal{X}(\Z) \setminus \mathcal{Z}(\Z) \\ H(x) \leq B}} \omega_{\mathcal{Z},y}(x)^k
				= \sum_{p_1,\dots,p_k \leq y} \#\{ x \in \mathcal{X}(\Z)\setminus \mathcal{Z}(\Z) :
	H(x) \leq B, x \bmod p_i \in \mathcal{Z}(\F_{p_i}) \, \forall i \leq k\}.
		\]
	By the Lang--Weil estimates \cite{LW54} we have $\#\mathcal{X}(\F_p) \sim p^n$
	and $\#\mathcal{Z}(\F_p) \ll p^{n-2}$ where $n = \dim X$.
	We apply our equidistribution assumption \eqref{eqn:eff_equ} to thus obtain
		\[
			\frac{1}{\#\{x \in \mathcal{X}(\Z) : H(x) \leq B\}} \sum_{\substack{x \in \mathcal{X}(\Z) \setminus \mathcal{Z}(\Z) \\ H(x) \leq B}}
	\omega_{\mathcal{Z},y}(x)^k 
				\ll \sum_{p_1,\dots,p_k \leq y} \frac{1}{\mathrm{lcm}(p_1,\dots,p_k)^2}  + O(B^{-\eta} y^{kM}).
		\]
	The summation above converges as $y\to \infty$, and choosing $y=B^{\eta/kM}$, the error term remains bounded.  Thus,
		\begin{equation} \label{eqn:omega-y-limit}
			\limsup_{B\to\infty} \frac{1}{\#\{x \in \mathcal{X}(\Z) : H(x) \leq B\}} \sum_{\substack{x \in \mathcal{X}(\Z) \setminus \mathcal{Z}(\Z) \\ H(x) \leq B}}
	\omega_{\mathcal{Z},y}(x)^k 
		\end{equation}
	exists.  To compare $\omega_{\mathcal{Z}}$ with $\omega_{\mathcal{Z},y}$, we note that
	if $x \bmod p \in \mathcal{Z}(\F_p)$ then $p \ll H(x)^d$, where $d = \deg \mathcal{Z}$.	
	This gives
		\[
			\omega_{\mathcal{Z}}(x) - \omega_{\mathcal{Z},y}(x) 
				\leq 1 + \frac{\log B^d}{\log y} \ll \frac{dkM}{\eta}
		\]
	by our choice $y = B^{\eta/kM}$.
	This implies that
		\[
			\omega_{\mathcal{Z}}(x)^k
				= \omega_{\mathcal{Z},y}(x)^k + O\left(\frac{dkM}{\eta} \omega_{\mathcal{Z},y}(x)^{k-1}\right),
		\]
	and so 
		\[
			\limsup_{B \to \infty} \frac{1}{\#\Omega_B} \sum_{\substack{x \in \mathcal{X}(\Z) \setminus \mathcal{Z}(\Z) \\ H(x) \leq B}} \omega_{\mathcal{Z}}(x)^k
		\]
	must exist by comparison to the analogous quantity \eqref{eqn:omega-y-limit}.
\end{proof}

\begin{lemma} \label{lem:transverse}
	For each integer $k \geq 1$,
	$$
		\limsup_{B \to \infty} \frac{1}{\#\{x \in \mathcal{X}(\Z) : H(x) \leq B\}} 
		\sum_{\substack{x \in \mathcal{X}(\Z) \setminus \mathcal{D}_1(\Z) \\ H(x) \leq B}} (\omega_{\mathcal{D}_1 \setminus \mathcal{D}_2}(x) - \omega^1_{\mathcal{D}_1 \setminus \mathcal{D}_2}(x))^k
	$$
	exists.
\end{lemma}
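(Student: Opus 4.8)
The plan is to bound $\omega_{\mathcal{D}_1 \setminus \mathcal{D}_2}(x) - \omega^1_{\mathcal{D}_1 \setminus \mathcal{D}_2}(x)$ from above by a count of primes which splits into a "codimension two" part handled directly by Lemma~\ref{lem:codim_2}, and a "tangency" part handled by a variant of the proof of that lemma, using congruences modulo $p^2$ in place of modulo $p$. First observe that the conditions defining $\omega^1_{\mathcal{D}_1 \setminus \mathcal{D}_2}$ only remove primes counted by $\omega_{\mathcal{D}_1 \setminus \mathcal{D}_2}$, so for $x \in \mathcal{X}(\Z) \setminus \mathcal{D}_1(\Z)$,
$$ 0 \leq \omega_{\mathcal{D}_1 \setminus \mathcal{D}_2}(x) - \omega^1_{\mathcal{D}_1 \setminus \mathcal{D}_2}(x) \leq \nu(x) := \#\{ p : x \bmod p \in \mathcal{D}_1(\F_p), \ v_p(f(x)) \geq 2 \}, $$
where, as in the discussion following \eqref{def:omega_1}, $f = 0$ is a local equation for $\mathcal{D}_1$ near $x \bmod p$ and $v_p(f(x))$ is well defined. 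It thus suffices to show $\limsup_{B \to \infty} \frac{1}{\#\{x : H(x) \leq B\}} \sum_{H(x) \leq B} \nu(x)^k$ exists. Let $\mathcal{W} \subseteq \mathcal{X}$ be the closure of $X^{\mathrm{sing}} \cup D_1^{\mathrm{sing}}$. Since $X$ is normal, $X^{\mathrm{sing}}$ has codimension $\geq 2$ in $X$; since $D_1$ is reduced it is generically smooth, so $D_1^{\mathrm{sing}}$ has codimension $\geq 1$ in $D_1$, hence $\geq 2$ in $X$. Therefore $\mathcal{W}$ is closed of codimension $\geq 2$ in $\mathcal{X}$, and there is a finite set $S$ of primes (containing those $\leq A$) so that for $p \notin S$ and $x \bmod p \in \mathcal{D}_1(\F_p) \setminus \mathcal{W}(\F_p)$, both $\mathcal{X}$ and $\mathcal{D}_1$ are smooth over $\Z_p$ at $x \bmod p$.

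Splitting $\nu(x)$ according to whether $p \in S$, or $p \notin S$ with $x \bmod p \in \mathcal{W}(\F_p)$, or neither, and using $(a+b+c)^k \leq 3^{k-1}(a^k+b^k+c^k)$, we get
$$ \nu(x)^k \leq 3^{k-1}\bigl( (\#S)^k + \omega_{\mathcal{W}}(x)^k + \nu_{\mathrm{sm}}(x)^k \bigr), \qquad \nu_{\mathrm{sm}}(x) := \#\{ p \notin S : x \bmod p \in \mathcal{D}_1(\F_p)\setminus \mathcal{W}(\F_p), \ v_p(f(x)) \geq 2 \}. $$
The term $(\#S)^k$ contributes $O(\#\{x : H(x)\leq B\})$, and the moments of $\omega_{\mathcal{W}}$ are controlled by Lemma~\ref{lem:codim_2} since $\mathcal{W}$ has codimension $\geq 2$. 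For $\nu_{\mathrm{sm}}$, the key local point is that for $p \notin S$ the condition "$x \bmod p \in \mathcal{D}_1(\F_p) \setminus \mathcal{W}(\F_p)$ and $v_p(f(x)) \geq 2$" depends only on $x \bmod p^2$, hence defines a subset $\Upsilon_p \subseteq \mathcal{X}(\Z/p^2\Z)$, and $\#\Upsilon_p \ll p^{2n-2}$ with $n = \dim X$: over a point $\bar x \in \mathcal{D}_1(\F_p)\setminus\mathcal{W}(\F_p)$ the fibre of $\mathcal{X}(\Z/p^2\Z) \to \mathcal{X}(\F_p)$ is a torsor under $\F_p^n$ on which $f/p$ (well defined since $\bar x \in \mathcal{D}_1$) is affine-linear with nonzero linear part $\mathrm{d}f(\bar x)$ — nonzero because $\bar x$ is a smooth point of $\mathcal{D}_1 = \{f=0\}$ inside the smooth scheme $\mathcal{X}_{\F_p}$ — so exactly $p^{n-1}$ of the $p^n$ lifts satisfy $v_p(f(x)) \geq 2$; combined with $\#\mathcal{D}_1(\F_p) \ll p^{n-1}$ from Lang--Weil and $\#\mathcal{X}(\Z/p^2\Z) \gg p^{2n}$ this gives the bound (one covers $\mathcal{D}_1\setminus\mathcal{W}$ by finitely many affines on which $\mathcal{D}_1$ is principal so that $\Upsilon_p$ is well defined).

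Given this, one repeats the proof of Lemma~\ref{lem:codim_2} verbatim with $p$ replaced by $p^2$: truncate $\nu_{\mathrm{sm}}$ to primes $\leq y$, expand the $k$-th moment, swap the order of summation, and apply \eqref{eqn:eff_equ} with modulus $Q = \prod_i p_i^2$ (coprime to $\prod_{p \leq A}p$ since each $p_i \notin S$), obtaining
$$ \frac{1}{\#\{x : H(x) \leq B\}}\sum_{H(x) \leq B}\nu_{\mathrm{sm},y}(x)^k \ll \sum_{p_1, \dots, p_k \leq y} \frac{1}{\mathrm{lcm}(p_1, \dots, p_k)^2} + O(B^{-\eta} y^{2kM}). $$
The sum converges as $y \to \infty$, and $y = B^{\eta/(2kM)}$ keeps the error bounded, so the corresponding $\limsup$ exists; finally $v_p(f(x)) \geq 1$ forces $p \ll H(x)^{O(1)} \leq B^{O(1)}$, so $\nu_{\mathrm{sm}}(x) - \nu_{\mathrm{sm},y}(x) \ll 1$ for this $y$, whence $\nu_{\mathrm{sm}}(x)^k = \nu_{\mathrm{sm},y}(x)^k + O(\nu_{\mathrm{sm},y}(x)^{k-1})$ and the existence of $\limsup_B \frac{1}{\#\{\cdots\}}\sum \nu_{\mathrm{sm}}(x)^k$ follows by induction on $k$. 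Assembling the three pieces proves the lemma.

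The only genuinely new ingredient beyond Lemma~\ref{lem:codim_2} — and the main obstacle — is the local analysis at a smooth point showing that non-transversality modulo $p$ is cut out by one extra congruence modulo $p^2$ of codimension one in the fibre, so that it contributes like a codimension-two condition on $x$; this requires choosing $\mathcal{W}$ correctly (to confine all non-smooth behaviour to a codimension-two locus and finitely many bad primes) and handling the locality of a Cartier equation $f$ carefully enough to turn "$v_p(f(x)) \geq 2$" into a bona fide congruence condition. Once that is set up, the equidistribution bookkeeping is identical to the previous lemma.
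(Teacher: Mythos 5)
Your argument is correct and follows essentially the same route as the paper: isolate a codimension-two locus (your $\mathcal{W}$, the paper's $\mathcal{Z}$) whose contribution is absorbed by Lemma~\ref{lem:codim_2}, show that non-transverse intersection at a smooth point is a mod-$p^2$ condition of density $O(1/p^2)$, and rerun the moment computation of Lemma~\ref{lem:codim_2} with modulus $p^2$. The only difference is that you prove the key bound $\#\Upsilon_p \ll p^{2n-2}$ directly by the Hensel-type fibre analysis, whereas the paper cites \cite[Cor.~2.4]{BL19} for exactly this estimate; your derivation of it is sound.
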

\begin{proof}
	Let	$\mathcal{D}:=\mathcal{D}_1$ and let
	$\mathcal{Z} \subset \mathcal{X}$ denote the union of the non-smooth locus of $\mathcal{D}$
	and the restriction of the non-smooth locus of $\mathcal{X}$ to $\mathcal{D}$. 
	We have $\mathcal{Z} \neq \mathcal{D}$ as $X$ is normal and $D_1$ is reduced.
	Then
	$$\omega_{\mathcal{D}_1 \setminus \mathcal{D}_2}(x) - \omega^1_{\mathcal{D}_1 \setminus \mathcal{D}_2}(x) \ll
	|\omega_{\mathcal{D}}(x) - \omega^1_{\mathcal{D}}(x)|
	\ll 
	|\omega_{\mathcal{D} \setminus \mathcal{Z}}(x) - \omega^1_{\mathcal{D} \setminus \mathcal{Z}}(x)|
	+ \omega_{\mathcal{Z}}(x). $$
	By Lemma \ref{lem:codim_2} the moments of $\omega_{\mathcal{Z}}$ exist, so it suffices to consider
	$\omega_{\mathcal{D} \setminus \mathcal{Z}}(x) - \omega^1_{\mathcal{D} \setminus \mathcal{Z}}(x)$.
	On the one hand, by \cite[Cor.~2.4]{BL19} we have
	$$\#\{ x \in \mathcal{X}(\Z/p^2\Z) : x \text{ meets $\mathcal{D} \bmod p$ 
	non-transversely in a smooth point} \}	\ll p^{2n - 2},$$
	where $n = \dim X$. On the other hand, by the Lang--Weil estimates \cite{LW54} and Hensel's lemma 
	\cite[Lem.~2.1]{BL19} applied to the smooth locus of $\mathcal{X}$, 
	we have $\#\mathcal{X}(\Z/p^2\Z) \gg p^{2n}$. 
	It follows that the proportion of $x \in \mathcal{X}(\Z/p^2\Z)$
	which meet $\mathcal{D}$ non-transversely in a smooth point is $O(1/p^2)$. We now proceed exactly as in the proof of Lemma \ref{lem:codim_2}.
\end{proof}

	We now complete the proof of Theorem \ref{thm:main}.

\begin{proof}[Proof of Theorem \ref{thm:main}]
	Applying \cite[Thm.~1.9]{EKM} to the function
	$$
	\omega_{\mathcal{D}_1}(x) = \#\{ p : x \bmod p \in \mathcal{D}_1(\F_p)\}
	$$
	shows that 
	$$\frac{\omega_{\mathcal{D}_1}(x) - c_{D_1} \log \log B}{\sqrt{c_{D_1}\log \log B}}$$
	converges in distribution to a standard normal, where $c_{D_1}$ 
	denotes the number of irreducible components of $D_1$. 
	Write $D_1 \cap D_2 = E \sqcup Z$ where $E$ is a divisor and $Z$ has codimension $2$ in $X$.
	Let $\mathcal{E}$ and $\mathcal{Z}$ be their respective closures in $\mathcal{X}$. 
	As $E$ and $Z$ are disjoint, we have
	$$\omega_{\mathcal{D}_1 \setminus \mathcal{D}_2}(x) = 
	\omega_{\mathcal{D}_1}(x) - \omega_{\mathcal{E}}(x) - \omega_{\mathcal{Z}}(x) + O(1).$$
	Using $c_{D_1} = c_{D_1 \setminus D_2} + c_{E}$, to prove the first part,
	it thus suffices to show that
	$$\frac{\omega_{\mathcal{Z}}(x)}{\sqrt{\log \log B}}$$
	converges in distribution to $0$. 
	However by Lemma \ref{lem:codim_2} we have
	$$\lim_{B \to \infty} \frac{\omega_{\mathcal{Z}}(x)^k}{(\log \log B)^{k/2}}
	= 0 $$
	for every integer $k \geq 1$, which shows the desired claim.
	For the second part, it suffices to show that 
	$$\frac{\omega_{\mathcal{D}_1 \setminus \mathcal{D}_2}(x) - \omega^1_{\mathcal{D}_1 \setminus \mathcal{D}_2}(x)}{\sqrt{\log \log B}}$$
	converges in distribution to $0$. This similarly follows from Lemma \ref{lem:transverse}.
\end{proof}

\begin{remark}A version of Theorem \ref{thm:main} will hold for general variants $\omega_{\mathcal{D}_1}^*(x)$ of $\omega_{\mathcal{D}_1}(x)$, like $\omega_{\mathcal{D}_1 \setminus \mathcal{D}_2}$ and $\omega^1_{\mathcal{D}_1 \setminus \mathcal{D}_2}$, provided the following holds: whether a prime $p$ counted by $\omega_{\mathcal{D}_1}(x)$ is not counted by $\omega_{\mathcal{D}_1}^*(x)$ is determined by congruence conditions $A \subset \mathcal{X}(\Z/p^k\Z)$ such that $(\#\mathcal{D}(\Z/p^k\Z) - \#A)/\#\mathcal{X}(\Z/p^k\Z) = O(p^{-1-\delta})$ for some divisor $\mathcal{D}\subset \mathcal{X}$ and some $\delta>0$.  
\end{remark}

\section{Application to polynomials}\label{sec:apps}

The rest of our results are based on the following simple application of Theorem \ref{thm:main}.
\begin{theorem} \label{thm:polynomials}
Let $h_1, h_2 \in \Z[x_1,\dots,x_n]$ be non-zero polynomials with $h_1$ non-constant and squarefree in $\Q[x_1,\dots,x_n]$. Let $c$  be the number of non-associated irreducible factors of $h_1$ not dividing $h_2$, and suppose that $c > 0$. Then the random variables
\begin{align*}
\{ \x \in \Z^n : h_1(\x) \neq 0, \| \x \| \leq B\} \to \R,& \quad \x \mapsto 
\frac{\#\{ p \mid h_1(\x) : p \nmid h_2(\x)\} - c \log \log B}{\sqrt{c \log \log B}} \\
\{ \x \in \Z^n : h_1(\x) \neq 0, \| \x \| \leq B\} \to \R,& \quad \x \mapsto 
\frac{\#\{ p  : v_p(h_1(\x)) =1,  p \nmid h_2(\x)\} - c \log \log B}{\sqrt{c \log \log B}} 
\end{align*}
converge in distribution to a standard normal. 
\end{theorem}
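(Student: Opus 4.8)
The plan is to deduce Theorem~\ref{thm:polynomials} from Theorem~\ref{thm:main} applied to affine space. I would take $X=\A^n\subset\P^n_\Q$ via the standard open immersion, so that the naive height on $\P^n$ restricts to $H(\x)=\max(1,|x_1|,\dots,|x_n|)$, and the model $\mathcal{X}=\A^n_\Z$, with $\mathcal{X}(\Z)=\Z^n$. The first task is to verify the effective equidistribution hypothesis~\eqref{eqn:eff_equ} for this data. This is a routine lattice-point count: for $\a\in(\Z/Q\Z)^n$ the number of $\x\in\Z^n$ in a box of side $\sim B$ with $\x\equiv\a\pmod Q$ is $(2B/Q)^n+O((B/Q+1)^{n-1})$, and summing over $\Upsilon\subset\mathcal{X}(\Z/Q\Z)=(\Z/Q\Z)^n$ and dividing by the total count $\sim(2B)^n$ yields~\eqref{eqn:eff_equ} with, say, $M=\eta=1$ and $A=1$ (so no coprimality restriction on $Q$ is imposed). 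Since all norms on $\R^n$ are equivalent, one may take $\|\cdot\|$ to be $\|\cdot\|_\infty$ throughout without affecting the limiting distributions, and then $H(\x)\le B\iff\|\x\|\le B$.

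Next I would reduce to the case where $h_1,h_2$ are primitive and $h_2$ is squarefree. Replacing $h_1$ and $h_2$ by their primitive parts alters each of the two counts in the theorem by $O(1)$ uniformly in $\x$ (only primes dividing the contents are affected), and a uniformly bounded perturbation of the counting function changes the normalized random variable by $O(1/\sqrt{\log\log B})=o(1)$, hence does not affect convergence in distribution. With $h_1,h_2$ primitive I would further replace $h_2$ by $\rad(h_2)$, the product of its distinct irreducible factors: for $\x$ with $h_2(\x)\ne0$ the integers $h_2(\x)$ and $\rad(h_2)(\x)$ have the same prime divisors, so the condition $p\nmid h_2(\x)$ is unchanged, while the locus $\{\x:\|\x\|\le B,\ h_2(\x)=0\}$ has cardinality $O(B^{n-1})=o(B^n)$ and may be discarded. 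Since $h_1$ is already assumed squarefree, I may thus assume $h_1,h_2$ are primitive and squarefree, $h_1$ non-constant, and $c>0$.

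Now I would apply Theorem~\ref{thm:main} with $D_1=V(h_1)$ and $D_2=V(h_2)$ in $\A^n_\Q$. Affine space is smooth, hence normal; each $h_i$ squarefree makes $D_i$ a reduced effective Cartier divisor; $D_1\ne0$ since $h_1$ is non-constant, and $D_1\not\subseteq D_2$ since $c>0$. The irreducible components of $D_1$ are the $V(g)$ for $g$ an irreducible factor of $h_1$ in $\Q[x_1,\dots,x_n]$, each such $V(g)$ being integral, and $V(g)\subseteq D_2\iff g\mid h_2$; hence $c_{D_1\setminus D_2}=c$. By Gauss's lemma the closure $\mathcal{D}_i$ of $D_i$ in $\mathcal{X}=\A^n_\Z$ is $V(h_i)$, with $h_i$ a global and hence local defining equation. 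For $\x\in\mathcal{X}(\Z)$ and a prime $p$ one has $\x\bmod p\in\mathcal{D}_i(\F_p)\iff p\mid h_i(\x)$, so~\eqref{def:omega_Z} reads $\omega_{\mathcal{D}_1\setminus\mathcal{D}_2}(\x)=\#\{p:p\mid h_1(\x),\ p\nmid h_2(\x)\}$; and unwinding~\eqref{def:omega_1} with local equation $h_1$, the condition ``$\x$ meets $\mathcal{D}_1\bmod p$ transversely outside $\mathcal{D}_2$'' becomes ``$p\mid h_1(\x)$, $p\nmid h_2(\x)$, and $v_p(h_1(\x))=1$'', i.e.\ (the first clause being subsumed in the last) $\omega^1_{\mathcal{D}_1\setminus\mathcal{D}_2}(\x)=\#\{p:v_p(h_1(\x))=1,\ p\nmid h_2(\x)\}$. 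Finally $\mathcal{X}(\Z)\setminus\mathcal{D}_1(\Z)=\{\x\in\Z^n:h_1(\x)\ne0\}$, so the two families of random variables in Theorem~\ref{thm:polynomials} are exactly those appearing in Theorem~\ref{thm:main}, and the result follows.

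The only computational ingredient is the verification of~\eqref{eqn:eff_equ} for $\A^n_\Z$ with the box height, which is elementary; the rest is bookkeeping --- passing to primitive squarefree representatives, discarding the negligible locus $h_2=0$, and translating the scheme-theoretic definitions of $\omega_{\mathcal{D}_1\setminus\mathcal{D}_2}$ and $\omega^1_{\mathcal{D}_1\setminus\mathcal{D}_2}$ into divisibility statements about the integers $h_i(\x)$. I therefore do not expect a serious obstacle.
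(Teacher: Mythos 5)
Your proposal is correct and follows exactly the same route as the paper, whose entire proof is the single line ``Apply Theorem \ref{thm:main} with $\mathcal{X}=\A^n_\Z$, $D_1: h_1(\x)=0$ and $D_2: h_2(\x)=0$.'' You have merely filled in the routine verifications (effective equidistribution for boxes, passage to primitive squarefree representatives, the identification $c_{D_1\setminus D_2}=c$, and the translation of $\omega_{\mathcal{D}_1\setminus\mathcal{D}_2}$ and $\omega^1_{\mathcal{D}_1\setminus\mathcal{D}_2}$ into divisibility conditions) that the authors leave implicit, and all of these are carried out correctly.
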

\begin{proof}
	Apply Theorem \ref{thm:main} with $\mathcal{X}= \A^n_\Z$ and $D_1 : h_1(\x) = 0$ and $D_2 : h_2(\x) = 0$.
\end{proof}

\begin{remark}
	Theorem \ref{thm:main} also gives versions of Theorem \ref{thm:polynomials} for 
	projective space instead of affine space. The corresponding effective equidistribution property
	is proven in \cite[Prop.~2.1]{LS21}.
\end{remark}

\begin{corollary}\label{cor:poly}
Let $h_1$ and $h_2$ be as in Theorem \ref{thm:polynomials}.  For $\t \in \Z^n$, let $\omega_{h_1,h_2}(\t)$ be the number of primes $p$ dividing $h_1(\t)$ but not $h_2(\t)$. Then
	$$\lim_{B \to \infty}
	\frac{\#\left\{ \t \in \Z^n : 
	\begin{array}{ll} h_1(\t) \neq 0, \| \t \| \leq B, \omega_{h_1,h_2}(\t) \geq (\log \log B)/(\log \log \log B)\\
	\end{array}
	\right\}}
	{\#\{ \t \in \Z^n :  \| \t \| \leq B\}} = 1.$$
\end{corollary}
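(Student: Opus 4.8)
The plan is to derive the corollary directly from the first convergence statement of Theorem~\ref{thm:polynomials}, using only the elementary observation that the threshold $(\log\log B)/(\log\log\log B)$ has strictly smaller order of magnitude than the mean $c\log\log B$, since $c>0$ is fixed and $\log\log\log B\to\infty$.

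Write $L=\log\log B$, and for $\t$ ranging over the finite set $S_B:=\{\t\in\Z^n: h_1(\t)\neq 0,\ \|\t\|\le B\}$, equipped as usual with the uniform measure (we write $\Pr_{S_B}$ for the associated probabilities), set
\[
Z_B(\t):=\frac{\omega_{h_1,h_2}(\t)-cL}{\sqrt{cL}}.
\]
By Theorem~\ref{thm:polynomials} the law of $Z_B$ converges to the standard normal, so $\Pr_{S_B}(Z_B\le t)\to\Phi(t)$ for every real $t$, where $\Phi$ denotes the (continuous) standard normal distribution function. The first thing I would record is that, for $\t\in S_B$, the inequality $\omega_{h_1,h_2}(\t)<L/\log\log\log B$ is equivalent to $Z_B(\t)<\psi(B)$ with
\[
\psi(B):=\frac{L/\log\log\log B-cL}{\sqrt{cL}}=\sqrt{L/c}\left(\frac{1}{\log\log\log B}-c\right),
\]
and that $\psi(B)\to-\infty$ as $B\to\infty$.

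With this in hand I would argue as follows. The $\t$ with $h_1(\t)=0$ lie on a hypersurface of bounded degree, hence number $O(B^{n-1})$ among those with $\|\t\|\le B$; since $\#\{\t:\|\t\|\le B\}\asymp B^n$, the denominator in the corollary agrees with $\#S_B$ up to a factor $1+O(1/B)$, and it suffices to show that $\Pr_{S_B}(\omega_{h_1,h_2}(\t)<L/\log\log\log B)\to 0$. Now fix $T>0$. For all $B$ large enough $\psi(B)<-T$, so
\[
\Pr_{S_B}\big(\omega_{h_1,h_2}(\t)<L/\log\log\log B\big)=\Pr_{S_B}\big(Z_B(\t)<\psi(B)\big)\le \Pr_{S_B}\big(Z_B(\t)\le -T\big),
\]
and the right-hand side converges to $\Phi(-T)$ as $B\to\infty$. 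Hence the $\limsup$ over $B$ of the left-hand side is at most $\Phi(-T)$; letting $T\to\infty$ and using $\Phi(-T)\to 0$ shows this $\limsup$ equals $0$. Passing to complements yields the stated limit $1$.

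I do not anticipate a genuine obstacle here; this is the standard extraction of a tail bound from a central limit theorem, and the only two points requiring a little care are (i) matching the normalising denominator $\#\{\t:\|\t\|\le B\}$ of the corollary with the space $S_B$ on which Theorem~\ref{thm:polynomials} is stated, which is immediate once one knows that a hypersurface of bounded degree carries $O(B^{n-1})$ integer points of norm at most $B$; and (ii) the fact that the cut-off $\psi(B)$ drifts to $-\infty$ with $B$, handled by the usual device of fixing $T$, sending $B\to\infty$ first, and only then sending $T\to\infty$. The identical argument applied to the second convergence in Theorem~\ref{thm:polynomials} gives the analogous statement with $\omega_{h_1,h_2}(\t)$ replaced by $\#\{p:v_p(h_1(\t))=1,\ p\nmid h_2(\t)\}$, should a multiplicity-one version be desired.
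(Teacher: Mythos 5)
Your argument is correct and is exactly the intended deduction: the paper states Corollary~\ref{cor:poly} without proof, treating it as an immediate consequence of Theorem~\ref{thm:polynomials}, and your extraction of the tail bound from the central limit theorem (noting that the cutoff $(\log\log B)/(\log\log\log B)=o(\log\log B)$ forces the normalised threshold $\psi(B)\to-\infty$), together with the $O(B^{n-1})$ count for the hypersurface $h_1(\t)=0$ to reconcile the two denominators, is precisely the standard argument. No gaps.
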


Corollary \ref{cor:poly} was used in \cite{BFS} to show that $100\%$ of specializations in a certain family of genus two Jacobians have at least $N$ primes of semistable bad reduction (for any fixed $N$). In the rest of this paper, we show how to deduce similar results about rather general families of curves and abelian varieties. 

\section{Semistable reduction}
We recall some basic properties of semistable curves and Jacobians \cite[Tag 0E6X]{Stacks}. Another good reference is \cite[\S 8-10]{Liu}, but the definitions there are slightly different.  

Let $C$ be a geometrically connected projective curve over a field $F$, and assume the genus $g = \dim_F H^1(C, \mathcal{O}_C)$ is at least $1$.  Let $\bar F$ be an algebraic closure of $F$ and $C_{\bar F}$ the base change of $C$ to $\bar F$. Then $C$ is {\it smooth} if $C_{\bar F}$ is smooth over $\bar F$ (and in particular, irreducible).  The curve $C$ is {\it semistable} if $C_{\bar F}$ is smooth over $\bar F$ apart from finitely many nodes, and has no irreducible components isomorphic to $\P^1_{\bar F}$ that meet the rest of $C_{\bar F}$ in only one point. This last condition excludes curves like $\P^1$ or $\{xy = 0\} \subset \P^2$, consistent with the condition $g \geq 1$. 

\begin{definition} A smooth proper geometrically integral curve $C$ of genus $g \geq 1$ over $\Q$ has {\it good $($resp.\ semistable$)$ reduction} at $p$ if there exists a proper model $\mathcal{C}$ of $C_{\Q_p}$ over $\Spec \, \Z_p$ such that the special fibre $\mathcal{C}_{\F_p}$ is a smooth (resp.\ semistable) curve over $\F_p$. 
\end{definition} 

We say $C$ has {\it bad reduction} at $p$ if it does not have good reduction at $p$. 

\begin{remark}\label{rem:stable terminology}
$C/\Q_p$ is semistable if and only if its minimal proper regular model $\mathcal{C}/\Spec\, \Z_p$ has semistable special fiber \cite[10.3.34]{Liu}.  Moreover, if $F/\Q_p$ is a finite extension with ring of integers $R \subset F$, then the base change $\mathcal{C}_{\Spec\, R}$ is the minimal proper regular model for $C_F$ \cite[10.3.36]{Liu}. Thus if $C/\Q_p$ admits at least one bad but semistable model, then all other models are bad as well and $C$ has bad reduction over every finite extension of $\Q_p$. In other words, the good/bad reduction type of a semistable curve is `stable'. 
\end{remark}

Now let $A/\Q_p$ be an abelian variety, and let $\mathcal{A}$ be its N\'eron model over $\Spec \, \Z_p$ with special fiber $\mathcal{A}_{\F_p}$. The connected component of the identity $\mathcal{A}_{\F_p}^0$ is a geometrically connected commutative algebraic group over $\F_p$, hence sits in a short exact sequence
\[0 \to U \times T \to \mathcal{A}_{\F_p}^0 \to B \to 0,\]
where $B$ is an abelian variety, $T$ is a torus, and $U$ is a unipotent group. The numbers $a = \dim B$, $t = \dim T$, and $u = \dim U$ are the {\it abelian}, {\it toric}, and {\it unipotent ranks} of $A$ respectively. We have $u + t + a = \dim A$.
     
\begin{definition}
An abelian variety $A$ over $\Q$ has {\it good} (resp.\ {\it semistable}) {\it reduction at} a prime $p$ if the connected component of the identity of the special fibre $\mathcal{A}_{\F_p}$ of its N\'eron model $\mathcal{A}$ over $\Z_p$ is an abelian variety (resp.\ semi-abelian variety). 
\end{definition}

Thus $A/\Q_p$ has good (resp.\ semistable) reduction at $p$ if and only if $u + t = 0$ (resp.\ $u= 0$). In the definition above, we can equivalently ask for the existence of {\it some} proper model of $A_{\Q_p}$ over $\Z_p$ with the corresponding property in the special fibre.

If $C/F$ is a smooth curve, we write $\mathrm{Jac}(C) = \Pic^0(C)$ for its Jacobian, the abelian variety over $F$ of dimension $g$ parameterizing degree zero line bundles on $C$. 
\begin{lemma}\label{lem:DM}
Let $C/\Q$ be a smooth proper geometrically integral curve of genus $g \geq 1$. 
\begin{enumerate}[$(1)$]
\item  $C$ {\it has semistable reduction}  at $p$ if and only if $\mathrm{Jac}(C)$ has semistable reduction at $p$. 
\item If $\mathcal{C}$ is a semistable model for $C$ over $\Z_p$, then the toric rank of $\mathrm{Jac}(C)$ is equal to $m - c + 1$, where $m$ is the number of nodes in $\mathcal{C}_{\F_p}$ and $c$ is the number of irreducible components.  
\end{enumerate}
\end{lemma}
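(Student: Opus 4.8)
The plan is to prove Lemma \ref{lem:DM} using the classical theory relating the N\'eron model of a Jacobian to the Picard scheme of a semistable model, together with the standard description of the special fibre of the generalized Jacobian of a nodal curve.

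\textbf{Part (1).} First I would recall the criterion of semistable reduction in the form due to Deligne--Mumford (see \cite[Tag 0E6X]{Stacks}): a smooth proper geometrically integral curve $C/\Q_p$ has semistable reduction if and only if the N\'eron model of $\mathrm{Jac}(C)$ has semi-abelian connected special fibre, i.e.\ unipotent rank zero. In one direction, given a semistable model $\mathcal{C}/\Z_p$ of $C$, the relative Picard functor $\Pic^0_{\mathcal{C}/\Z_p}$ is representable by a smooth separated group scheme whose identity component provides (after passing to the N\'eron model) the semi-abelian reduction; the special fibre $\Pic^0(\mathcal{C}_{\F_p})$ is an extension of an abelian variety by a torus precisely because $\mathcal{C}_{\F_p}$ is a nodal curve (no cusps or worse). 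In the other direction, one invokes the potential semistable reduction theorem and Remark \ref{rem:stable terminology}: if $\mathrm{Jac}(C)$ has semistable reduction at $p$, then so does $C$ over some extension, but by the `stability' noted in the remark the reduction type does not change under base extension, so $C$ is already semistable over $\Q_p$. I would cite \cite[\S 9.2--9.5]{Liu} or the Stacks tag for the precise statements rather than reproving them.

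\textbf{Part (2).} Here the key input is the structure of $\Pic^0$ of a nodal curve over an algebraically closed field. Let $\mathcal{C}_{\F_p}$ have irreducible components $Y_1,\dots,Y_c$ and $m$ nodes, and let $\widetilde{\mathcal{C}}_{\F_p} \to \mathcal{C}_{\F_p}$ be the normalization, with components $\widetilde{Y}_i$ of geometric genus $g_i$. There is a short exact sequence of group schemes
\[
1 \to T \to \Pic^0(\mathcal{C}_{\F_p}) \to \prod_{i=1}^{c} \Pic^0(\widetilde{Y}_i) \to 1,
\]
where $T$ is a torus whose character lattice is identified with $H^1$ of the dual graph $\Gamma$ of $\mathcal{C}_{\F_p}$ (vertices = components, edges = nodes). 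Hence $\dim T = \dim H^1(\Gamma,\Z) = b_1(\Gamma) = m - (c-1) = m - c + 1$, using that $\Gamma$ is connected (which holds because $C$, and hence its semistable model, is geometrically connected). Since the toric rank of $\mathrm{Jac}(C)$ is by definition the dimension of the torus part of the connected N\'eron special fibre, and this identity component agrees with $\Pic^0(\mathcal{C}_{\F_p})$ when $\mathcal{C}$ is semistable (again \cite[\S 9.5--9.6]{Liu}), the toric rank equals $m - c + 1$, as claimed.

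\textbf{Main obstacle.} The routine computations (Euler characteristic of the dual graph, Betti number count) are harmless; the real care is in the identification of the connected component of the N\'eron special fibre with $\Pic^0$ of the semistable model, since in general the N\'eron model of the Jacobian need only agree with the Picard scheme up to the component group and one must know that $\mathcal{C}$ being \emph{semistable} (not merely regular) is enough to avoid correction terms in the \emph{identity component}. I would handle this by citing the precise comparison theorem in \cite{Liu} (Chapter 9, the discussion of N\'eron models of Jacobians) or the Stacks project, rather than attempting to reprove it; all that is genuinely needed downstream is the formula for the toric rank in part (2), and the equivalence in part (1), both of which are standard consequences of that theory.
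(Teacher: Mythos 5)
Your proposal is correct and takes essentially the same route as the paper, whose entire proof is the two citations \cite[Thm.~2.4]{DM} for part (1) and \cite[7.5.18]{Liu} for part (2); your sketch simply unpacks the content of those references (the Deligne--Mumford criterion via the Picard scheme of a semistable model, and the identification of the torus part of $\Pic^0$ of a nodal curve with $H^1$ of the dual graph, giving $b_1(\Gamma)=m-c+1$).
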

\begin{proof}
$(1)$ is a special case of \cite[Thm.\ 2.4]{DM} and $(2)$ is \cite[7.5.18]{Liu}.
\end{proof}

If a curve or abelian variety over $\Q$ has semistable but not good reduction at $p$, then we say it has {\it bad semistable reduction at} $p$.
\begin{remark}
If $C$ has good reduction, then so does $J$, by Lemma \ref{lem:DM}(2).  The converse may fail, however. For example, say $g = 2$ and $C$ reduces to a union $E \cup E'$ of two elliptic curves over $\F_p$ intersecting at a node. Then $C$ has bad semistable reduction, but $J$ has toric rank $0$ by  Lemma \ref{lem:DM}, and hence good reduction.  In fact, $J$ reduces to $E \times E'$ in this case. 
\end{remark}

If $A/\Q_p$ is an abelian variety with N\'eron model $\mathcal{A}$, the Tamagawa number $c_p(A)$ is by definition the number of $\F_p$-rational components of the group $\mathcal{A}_{\F_p}$. This is a crude measure of {\it how bad} the reduction of $A$ is at $p$. For instance, if $A$ has good reduction then $c_p(A) = 1$. The converse, however, is not true, as the following example shows.

\begin{example}\label{ex:ellipticcurve}
If $E/\Q$ is an elliptic curve with squarefree discriminant $\Delta$, then by Tate's algorithm \cite{Tate}, we have $c_p(E) = 1$ for all primes $p$, including those dividing $\Delta$. 
\end{example}   

More generally, the Tamagawa number $c_p(J)$ of a semistable Jacobian $J = \mathrm{Jac}(C)$ can be computed from the intersection matrix of the irreducible components of the special fiber of a minimal proper regular model $\mathcal{C}$ over $\Z_p$. This uses Raynaud's theorem, that the N\'eron model of $J$ is represented by $\Pic^0_{\mathcal{C}/\Z_p}$.  See \cite[\S 9.6]{BLR} for more details. 

In the next two sections, we consider families of curves $\{C_\t\}_{\t \in \Z^n}$ and prove Erd\H{o}s--Kac type laws for the number of primes of bad semistable reduction for specializations $\t$ of bounded height. 
For the sake of applications, we will prove a more precise result, which shows that it is primes of {\it minimally} bad  reduction that play the role of prime numbers under this analogy. The notion of `minimally bad reduction' will generalize Example~\ref{ex:ellipticcurve}: a certain discriminant polynomial will have $p$-adic valuation 1, which will imply that $C_\t$ and $\Jac(C_\t)$ have bad semistable reduction and moreover $c_p(\Jac(C_\t)) = 1$.      

\section{Families of hyperelliptic curves}\label{sec:hyperelliptic}

Let $g \geq 1$, $n \geq 1$ and let $a_0,\ldots,a_{2g+2} \in \Q[t_1,\ldots,t_n]$ be polynomials with integer coefficients. We consider the corresponding family 
$$y^2 = f_\t(x) := a_{2g+2}(\t)x^{2g+2} + \dots + a_1(\t)x + a_0(\t)$$
of hyperelliptic curves over $\A^n$. Denote by $\Delta(\t) \in \Q[t_1,\dots,t_n]$ the discriminant of $f_\t(x)$.
We say $C_\t$ has {\it minimally bad reduction} at $p$ if $v_p(\Delta(\t)) = 1$. 

\begin{lemma}\label{lem:disc1}
If $v_p(\Delta(\t)) = 1$, then
\begin{enumerate}[$(a)$]
\item both $C_\t$ and $\Jac(C_\t)$ have bad semistable reduction;
\item the curve $y^2 = f_\t(x,z)$ over $\Z_p$ is a minimal proper regular model for $C_\t$;
\item $c_p(\Jac(C_\t)) = 1$.
\end{enumerate}  
\end{lemma}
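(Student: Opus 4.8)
The plan is to analyze the singularity of the projective model $\mathcal{C}_\t : y^2 = f_\t(x,z)$ over $\Z_p$ under the hypothesis $v_p(\Delta(\t)) = 1$, and to show it is already regular, semistable, with exactly one node in its special fibre. First I would recall that the discriminant $\Delta(\t)$ of the binary form $f_\t(x,z)$ of degree $2g+2$ vanishes exactly when $f_\t$ has a repeated root; the condition $v_p(\Delta(\t)) = 1$ therefore forces $\bar f_\t \bmod p$ to have precisely one double root (and no root of higher multiplicity, and no two distinct double roots), since each such degeneration contributes at least $2$ — or more — to $v_p(\Delta)$. After a change of coordinates over $\Z_p$ (possible since the double root is a smooth $\F_p$-point of $\P^1$, or at worst defined over an unramified extension, which does not affect the valuation-theoretic computation and we may reduce to this case), the local equation near the singular point of the special fibre looks like $y^2 = u(x)\bigl(x^2 - p\,c\bigr)$ with $u$ a unit at the relevant point and $v_p(c)=0$; more precisely one gets $y^2 = (\text{unit})\cdot(x - \alpha)(x-\beta)$ with $\alpha \equiv \beta \bmod p$ but $\alpha \not\equiv \beta \bmod p^2$, so that the special fibre has an ordinary double point $y^2 = x^2 + (\text{higher order})$ and the total space is regular at that point because the partial derivative with respect to the uniformizer $p$ is a unit there.

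With the local model in hand, part $(b)$ follows: the scheme $\mathcal{C}_\t$ is proper over $\Z_p$, its generic fibre is the smooth curve $C_\t$ (smoothness of the generic fibre is equivalent to $\Delta(\t) \neq 0$, which holds since $v_p(\Delta(\t)) = 1 < \infty$), and the only point where it could fail to be regular is the node in the special fibre, where the explicit computation above shows regularity. A regular proper model is automatically the minimal proper regular model once we check the special fibre contains no $(-1)$-curve; here the special fibre is irreducible (a single double root does not disconnect the normalization of a genus $g \geq 1$ hyperelliptic curve, and the normalization has positive genus $g-1 \geq 0$, but even in the $g=1$ boundary case the fibre is an irreducible nodal cubic, not a $\P^1$), so no blow-down is possible and minimality is immediate. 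This also simultaneously yields part $(a)$: the special fibre is semistable (irreducible, one node, and not a $\P^1$ meeting the rest in one point — vacuous here since it is irreducible), so $C_\t$ has bad semistable reduction by Remark~\ref{rem:stable terminology}, and by Lemma~\ref{lem:DM}(1) so does $\Jac(C_\t)$; it is bad rather than good precisely because the special fibre is singular.

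For part $(c)$, I would invoke Lemma~\ref{lem:DM}(2) together with Raynaud's description of the Néron model of $\Jac(C_\t)$ via $\Pic^0_{\mathcal{C}_\t/\Z_p}$ (as recalled in the excerpt, following \cite[\S 9.6]{BLR}). Since the special fibre $\mathcal{C}_{\t,\F_p}$ is irreducible with a single node, we have $m = 1$ node and $c = 1$ component, so the toric rank of $\Jac(C_\t)$ is $m - c + 1 = 1$, consistent with bad semistable reduction. The component group of the Néron model of a semistable Jacobian is computed from the intersection graph of the special fibre of the minimal regular model: with one component and the node not separating, the relevant graph is a single vertex with a single loop, whose associated component group (the cokernel of the intersection matrix / the first homology of the dual graph, appropriately interpreted) is trivial, so $c_p(\Jac(C_\t)) = 1$. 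The main obstacle I anticipate is the bookkeeping in the first step: carefully justifying that $v_p(\Delta(\t)) = 1$ rules out all worse degenerations (two separate nodes, a cusp, a triple root, a node at a branch point versus away from it) and that after the coordinate change the total space really is regular at the node — this is the essential input, and the rest is a fairly mechanical translation through known dictionaries (minimal regular models, Raynaud, Lemma~\ref{lem:DM}). One subtlety worth flagging is the behaviour at $p = 2$ and at the "point at infinity" $z = 0$ of the weighted-projective model, where the naive affine equation degenerates; I would handle these by working with the standard smooth model of a hyperelliptic curve (two affine charts glued via $x \mapsto 1/x$, $y \mapsto y/x^{g+1}$) so that the discriminant computation and the local analysis are uniform across all charts.
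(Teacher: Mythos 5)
Your proposal is correct and follows essentially the same route as the paper: deduce from $v_p(\Delta(\t))=1$ that $\bar f_\t$ has exactly one double root, conclude that the homogenized model is regular and minimal with irreducible nodal special fibre (the paper simply cites \cite[8.3.53]{Liu} where you inline the local computation), and then get the toric rank and trivial component group from Lemma \ref{lem:DM} and \cite[\S 9.6]{BLR}. One small correction: your ``more precisely'' clause $\alpha \equiv \beta \bmod p$, $\alpha \not\equiv \beta \bmod p^2$ cannot occur with $\alpha,\beta \in \Z_p$ (it would force $v_p((\alpha-\beta)^2)\geq 2$, hence $v_p(\Delta)\geq 2$); the roots are conjugate over a ramified quadratic extension, so your first local form $y^2 = u(x)(x^2 - pc)$ with $c$ a unit is the correct one, and it is the one your regularity check actually uses.
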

\begin{proof}
If $f_\t$ has a root of multiplicity three or higher over $\F_p$, then by Dedekind's theorem,  we have $v_p(\Delta(\t)) \geq 2$. It follows that $f_\t$ has at most double roots over $\F_p$, and since the valuation of $\Delta(\t)$ is 1 it must have exactly one double root. For such hyperelliptic curves, the homogenization $\mathcal{C}_\t \colon y^2 = f_\t(x,z)$ is a minimal regular model for $C_\t$ over $\Z_p$, and the special fiber is an irreducible genus $g-1$ curve with a simple node \cite[8.3.53]{Liu}. Hence $C_\t$ has bad semistable reduction, and moreover the component group of $\Jac(C_\t)$ is trivial (\cite[\S 9.6]{BLR}). By Lemma \ref{lem:DM}, the toric rank of $\mathrm{Jac}(C_\t)$ is $1$, hence  $\mathrm{Jac}(C_\t)$ has bad reduction as well. 
\end{proof}
We say $h \in \Q[x_1,\ldots, x_n]$ is {\it squarefull} if each irreducible factor $g \mid h$ satisfies $g^2 \mid h$.     
\begin{theorem}\label{thm:hyperfamilies}
Assume $\Delta(\t)$ is non-constant and not squarefull. Let $c$ be the number of non-associated irreducible polynomials $h(\t)$ exactly dividing $\Delta(\t)$. Then the random variables
\begin{align*}
\{ \t \in \Z^n & : \Delta(\t) \neq 0, \| \t \| \leq B\} \to \R,  \\
 \t &\mapsto 
\frac{\#\{ p  : C_\t \text{ has minimally bad reduction at } p \} - c \log \log B}{\sqrt{c \log \log B}}
\end{align*}
converge in distribution to a standard normal as $B \to \infty$.
\end{theorem}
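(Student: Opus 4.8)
The plan is to deduce Theorem \ref{thm:hyperfamilies} from Corollary \ref{cor:poly} together with Lemma \ref{lem:disc1}, by identifying the set of primes of minimally bad reduction with a set of primes controlled by a polynomial count. First I would set $h_1 := \Delta(\t)$ as a polynomial in $\Z[t_1,\dots,t_n]$; the hypothesis that $\Delta$ is non-constant and not squarefull guarantees that $h_1$ has at least one irreducible factor $g$ with $g^2 \nmid \Delta$, i.e.\ the squarefree part of $\Delta$ is non-constant. Write $\Delta = \Delta_{\mathrm{sf}} \cdot \Delta_{\mathrm{rest}}$ where $\Delta_{\mathrm{sf}}$ is the product of the (pairwise non-associated) irreducible polynomials that exactly divide $\Delta$ and $\Delta_{\mathrm{rest}}$ collects the remaining factors (each of which divides $\Delta$ to order $\geq 2$). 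Then $c$ in the statement is exactly the number of non-associated irreducible factors of $\Delta_{\mathrm{sf}}$, and $\Delta_{\mathrm{sf}}$ is squarefree by construction.

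The key step is to compare the counting function $\#\{p : C_\t \text{ has minimally bad reduction at } p\}$ (i.e.\ $v_p(\Delta(\t))=1$) with the function $\#\{p : v_p(\Delta_{\mathrm{sf}}(\t))=1,\ p\nmid \Delta_{\mathrm{rest}}(\t)\}$ appearing in the second displayed random variable of Theorem \ref{thm:polynomials} applied to $h_1 = \Delta_{\mathrm{sf}}$, $h_2 = \Delta_{\mathrm{rest}}$. These two functions differ only at primes dividing the resultant $\mathrm{Res}(\Delta_{\mathrm{sf}},\Delta_{\mathrm{rest}})$ together with primes dividing the leading coefficients and finitely many other "bad" primes built into the model; crucially, for a fixed $\t$ with $\Delta(\t)\neq 0$ the set of such exceptional primes is bounded independently of $\t$ only up to a quantity that is itself $O(\log H(\t)/\log\log H(\t))$ — so a cleaner route is to invoke the moment-boundedness machinery. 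Concretely: if $v_p(\Delta(\t))=1$ then either $v_p(\Delta_{\mathrm{sf}}(\t))=1$ and $p\nmid \Delta_{\mathrm{rest}}(\t)$, or $p$ divides $\mathrm{Res}(\Delta_{\mathrm{sf}},\Delta_{\mathrm{rest}})$; conversely if $v_p(\Delta_{\mathrm{sf}}(\t))=1$ and $p\nmid \Delta_{\mathrm{rest}}(\t)$ then $v_p(\Delta(\t))=1$. So the symmetric difference is bounded by $\#\{p : p \mid \mathrm{Res}(\Delta_{\mathrm{sf}},\Delta_{\mathrm{rest}})\cdot(\text{leading terms})\}$, a fixed finite set of primes independent of $\t$. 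After dividing by $\sqrt{c\log\log B}$ this contributes a term converging in probability to $0$, exactly as in the final step of the proof of Theorem \ref{thm:main}. Therefore the renormalised functions $\frac{\#\{p : v_p(\Delta(\t))=1\} - c\log\log B}{\sqrt{c\log\log B}}$ and $\frac{\#\{p : v_p(\Delta_{\mathrm{sf}}(\t))=1,\ p\nmid\Delta_{\mathrm{rest}}(\t)\} - c\log\log B}{\sqrt{c\log\log B}}$ have the same limiting distribution, which is standard normal by Theorem \ref{thm:polynomials}.

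One subtlety to address: by Lemma \ref{lem:disc1}, $v_p(\Delta(\t))=1$ is genuinely equivalent to "$C_\t$ has minimally bad reduction at $p$" as defined, so the probabilistic statement about minimally bad reduction is literally the statement about the valuation of $\Delta$ — no further geometric input is needed beyond Lemma \ref{lem:disc1}(a). I also need the normalization set to match: Theorem \ref{thm:polynomials} uses $\{\t : \Delta_{\mathrm{sf}}(\t)\neq 0,\ \|\t\|\leq B\}$ while the theorem here uses $\{\t : \Delta(\t)\neq 0,\ \|\t\|\leq B\}$; since $\Delta(\t)\neq 0 \iff \Delta_{\mathrm{sf}}(\t)\neq 0$ and $\Delta_{\mathrm{rest}}(\t)\neq 0$, and the locus $\Delta_{\mathrm{rest}}(\t)=0$ (with $\Delta_{\mathrm{sf}}(\t)\neq 0$) is cut out by finitely many polynomial conditions defining a set of density $0$, removing or adding it does not change the limiting distribution. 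I expect the main obstacle to be bookkeeping around these exceptional primes and normalization sets rather than anything deep: one must check that all the "finitely many bad primes" (dividing resultants, discriminant content, leading coefficients) truly form a set independent of $\t$, so that their contribution to the count is $O(1)$ uniformly and washes out under the $\sqrt{\log\log B}$ renormalization. Everything else is a direct citation of Corollary \ref{cor:poly} / Theorem \ref{thm:polynomials} and Lemma \ref{lem:disc1}.
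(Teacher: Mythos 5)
Your proposal is correct and follows essentially the same route as the paper: write $\Delta = \Delta_{\mathrm{sf}}\cdot\Delta_{\mathrm{rest}}$ with $\Delta_{\mathrm{sf}}$ the product of the exactly-dividing irreducible factors, and apply Theorem \ref{thm:polynomials} (second counting function) with $h_1=\Delta_{\mathrm{sf}}$, $h_2=\Delta_{\mathrm{rest}}$. The only remark is that your detour through $\mathrm{Res}(\Delta_{\mathrm{sf}},\Delta_{\mathrm{rest}})$ is unnecessary: since every irreducible factor of $\Delta_{\mathrm{rest}}$ occurs with exponent at least $2$, the valuation $v_p(\Delta_{\mathrm{rest}}(\t))$ is either $0$ or at least $2$ for all $p$ outside the fixed finite set of primes dividing the content, so the two counting functions agree exactly up to those finitely many primes, which wash out under the $\sqrt{\log\log B}$ normalisation as you say.
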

\begin{proof}
Write $\Delta(\t) = \prod_{i = 1}^c h_i(\t) \prod_{i = c+1}^k h_i(\t)^{a_i}$, where the $h_i$ are irreducible and pairwise coprime, and $a_i \geq 2$ for $i > c$. We apply Theorem \ref{thm:polynomials} with $f_1 = \prod_{i = 1}^ch_i(\t)$ and $f_2 = \Delta(\t)/f_1$. By construction, the second counting function in Theorem \ref{thm:polynomials} exactly counts the number of primes $p$ of minimally bad (and by Lemma \ref{lem:disc1}, semistable) reduction for $C_\t$.  
\end{proof}

As an example, we apply this to the family of all hyperelliptic curves. 
\begin{corollary} \label{cor:standard}
Consider the family 
$$y^2 = a_{2g+2}x^{2g+2} + \dots + a_1x + a_0$$
of all hyperelliptic curves over $\A^{2g+2}$.
As $B \to \infty$, the random variables
\begin{align*}
\{ \mathbf{a} \in \Z^{2g+2} & : \Delta(\a) \neq 0, \| \mathbf{a} \| \leq B\} \to \R, \\
 \mathbf{a} &\mapsto 
\frac{\#\{ p  : C_{\mathbf{a}} \text{ has minimally bad reduction at } p\} - \log \log B}{\sqrt{ \log \log B}}\\
\{ \mathbf{a} \in \Z^{2g+2} & : \Delta(\a) \neq 0, \| \mathbf{a} \| \leq B\} \to \R, \\
 \mathbf{a} &\mapsto 
\frac{\#\{ p  : C_{\mathbf{a}} \text{ has bad reduction at } p\} - \log \log B}{\sqrt{ \log \log B}}
\end{align*}
converge in distribution to a standard normal.
\end{corollary}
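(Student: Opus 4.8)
The plan is to deduce Corollary~\ref{cor:standard} from Theorem~\ref{thm:hyperfamilies} applied to the universal family over $\A^{2g+2}$, together with an auxiliary argument comparing minimally bad reduction with bad reduction. For the first random variable, I would first check that the hypotheses of Theorem~\ref{thm:hyperfamilies} are met: the discriminant $\Delta(\a)$ of the generic binary form $a_{2g+2}x^{2g+2} + \dots + a_0$ (viewed as a polynomial in $\Z[a_0,\dots,a_{2g+2}]$) is non-constant and irreducible. Irreducibility of the discriminant of the generic polynomial is classical (it is the defining equation of the discriminant hypersurface, which is irreducible since the locus of forms with a repeated root is irreducible --- it is dominated by the incidence variety parametrising pairs (form, repeated root)). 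Hence $c = 1$, and Theorem~\ref{thm:hyperfamilies} gives the first assertion immediately.

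For the second random variable, the key point is that the count of primes of bad reduction differs from the count of primes of minimally bad reduction only by primes $p$ with $v_p(\Delta(\a)) \geq 2$. So I would write $\omega_{\mathrm{bad}}(\a)$ for the number of primes of bad reduction and $\omega_{\min}(\a)$ for the number of primes of minimally bad reduction; by the definition of the discriminant, $C_\a$ has bad reduction at $p$ only if $p \mid \Delta(\a)$ (this is where one uses that $\Delta(\a) \neq 0$ means $C_\a$ is a genuine smooth curve, and that $p \nmid \Delta(\a)$ forces good reduction), so $0 \le \omega_{\mathrm{bad}}(\a) - \omega_{\min}(\a) \le \#\{p : v_p(\Delta(\a)) \geq 2\}$. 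It then suffices to show that after dividing by $\sqrt{\log\log B}$ this difference converges in distribution to $0$, which as in the proof of Theorem~\ref{thm:main} follows if all its moments (averaged over $\a$ with $\|\a\| \le B$) are bounded as $B \to \infty$.

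The main technical step is therefore the moment bound for $\#\{p : v_p(\Delta(\a)) \ge 2\}$. I would prove this exactly as in the proof of Lemma~\ref{lem:codim_2}: the condition $v_p(\Delta(\a)) \geq 2$ is a congruence condition modulo $p^2$, and by the remark following Theorem~\ref{thm:main} (or directly via the argument in Lemma~\ref{lem:transverse}) the proportion of $\a \bmod p^2$ with $v_p(\Delta(\a)) \ge 2$ is $O(1/p^2)$, because the hypersurface $\Delta = 0$ is reduced (the discriminant is squarefree, being irreducible) so it meets most points transversely modulo $p$. Truncating at $y = B^{\eta/(kM)}$ using the effective equidistribution property \eqref{eqn:eff_equ} for $\A^{2g+2}_\Z$, summing the resulting convergent series $\sum \mathrm{lcm}(p_1,\dots,p_k)^{-2}$, and controlling the tail via $p \ll H(\a)^{\deg\Delta}$ gives the bounded $k$-th moment for every $k$. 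The one place requiring a little care --- and the step I expect to be the main obstacle --- is verifying that $p \nmid \Delta(\a)$ really does imply good reduction of $C_\a$ at $p$ in the homogenized/weighted-projective sense for all $p$ (including $p = 2$ and primes where the leading coefficient $a_{2g+2}(\a)$ vanishes mod $p$, where one must pass to a suitable model at infinity); but this is standard for hyperelliptic curves presented by a separable binary form, and the necessary facts about the minimal regular model are already cited in Lemma~\ref{lem:disc1} via \cite[8.3.53]{Liu}.

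\begin{proof}[Proof of Corollary \ref{cor:standard}]
The discriminant $\Delta$ of the generic binary form of degree $2g+2$ is a non-constant irreducible polynomial in $\Z[a_0,\dots,a_{2g+2}]$; indeed the discriminant hypersurface is the image of the irreducible incidence variety parametrising pairs consisting of a form together with a repeated root, hence is irreducible, and thus $\Delta$ is irreducible (in particular squarefree and not squarefull). So Theorem \ref{thm:hyperfamilies} applies with $c = 1$ and yields the first statement.

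For the second statement, let $\omega_{\mathrm{bad}}(\a)$ and $\omega_{\min}(\a)$ denote the number of primes of bad, respectively minimally bad, reduction of $C_\a$. If $\Delta(\a) \ne 0$ and $p \nmid \Delta(\a)$ then the model $y^2 = f_\a(x,z)$ has smooth special fibre, so $C_\a$ has good reduction at $p$; hence every prime of bad reduction divides $\Delta(\a)$, and by Lemma \ref{lem:disc1} every prime with $v_p(\Delta(\a)) = 1$ is counted by $\omega_{\min}$. Therefore
$$0 \le \omega_{\mathrm{bad}}(\a) - \omega_{\min}(\a) \le \#\{ p : v_p(\Delta(\a)) \ge 2\}.$$
The condition $v_p(\Delta(\a)) \ge 2$ is a congruence condition modulo $p^2$. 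Since $\Delta$ is squarefree, the argument of Lemma \ref{lem:transverse} (applied with $\mathcal{D} : \Delta = 0$ on $\mathcal{X} = \A^{2g+2}_\Z$, which satisfies \eqref{eqn:eff_equ} by \cite[Prop.~2.1]{LS21} or directly) shows the proportion of $\a \bmod p^2$ with $v_p(\Delta(\a)) \ge 2$ is $O(1/p^2)$. Proceeding exactly as in Lemma \ref{lem:codim_2}, truncating at $y = B^{\eta/(kM)}$, we conclude that for every integer $k \ge 1$
$$\limsup_{B \to \infty} \frac{1}{\#\{\a \in \Z^{2g+2} : \|\a\| \le B\}} \sum_{\substack{\a \in \Z^{2g+2},\ \Delta(\a) \ne 0 \\ \|\a\| \le B}} \bigl(\omega_{\mathrm{bad}}(\a) - \omega_{\min}(\a)\bigr)^k < \infty.$$
In particular $\bigl(\omega_{\mathrm{bad}}(\a) - \omega_{\min}(\a)\bigr)/\sqrt{\log\log B}$ converges in distribution to $0$, so the second family of random variables has the same limiting distribution as the first, namely a standard normal.
\end{proof}
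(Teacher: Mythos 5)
Your proof is correct and follows essentially the same route as the paper: irreducibility of the discriminant gives $c=1$, Theorem \ref{thm:polynomials} (via Theorem \ref{thm:hyperfamilies}) yields the minimally-bad count, and the bad-reduction count differs only by the prime $2$ and primes with $v_p(\Delta(\a))\geq 2$, which are negligible by the moment bounds of Section \ref{sec:EK} --- the paper instead reads this off by sandwiching $\omega_{\mathrm{bad}}$ between the two counting functions of Theorem \ref{thm:polynomials} applied to $\Delta$, but the underlying estimates are the same. One small caveat: irreducibility of the hypersurface $\{\Delta=0\}$ (via the incidence variety) only shows that $\Delta$ is a power of an irreducible polynomial, whereas you need $\Delta$ squarefree for the ``not squarefull'' hypothesis of Theorem \ref{thm:hyperfamilies}; this is the classical fact the paper cites from \cite[Ex.~1.4]{GKZ}, and can be justified by noting that $\Delta$ vanishes to order exactly one at a form with a single simple double root.
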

\begin{proof}
The polynomial $\Delta$ is irreducible as an element of $\mathbb{C}[a_0,\dots, a_{2g+1}]$ (see
\cite[Ex.\ 1.4]{GKZ}). Applying Theorem \ref{thm:polynomials} to $\Delta$ we see that only primes of minimally bad reduction contribute to the distributions.  
\end{proof}

Corollary \ref{cor:standard} implies, as in Corollary \ref{cor:poly}, that for $100\%$ of hyperelliptic curves, both $C_\t$ and $\Jac(C_\t)$ have
bad semistable reduction for at least $N$ primes, for any $N > 0$. The case $N = 1$ of this result for a related family of hyperelliptic curves is due to Van Bommel \cite{vB14}.

For families which may not satisfy the hypotheses of Theorem \ref{thm:hyperfamilies}, we prove the following variant whose conclusion is a bit weaker.   Denote by $\Delta'(\t)$ the discriminant of $f'_\t(x):= \frac{\mathrm{d} f_\t(x)}{ \mathrm{d} x}.$

\begin{theorem} \label{thm:hyperelliptic_weak}
Assume that $\Delta(\t)$ and $\Delta'(\t)$ are non-constant and  let $c$ be the number of non-associated irreducible factors of $\Delta(\t)$ not dividing $\Delta'(\t)$. 
\begin{enumerate}
\item If $(\Delta'(\t)) \not \subseteq \rad(\Delta(\t))$, then the random variables
\begin{align*}
\{ \t \in \Z^n & : \Delta(\t) \neq 0, \| \t \| \leq B\} \to \R,  \\
 \t &\mapsto 
\frac{\#\{ p  : \Jac(C_\t) \text{ has bad semistable reduction at } p \text{ and } p \nmid \Delta'(\t)\} - c \log \log B}{\sqrt{c \log \log B}}
\end{align*}
converge in distribution to a standard normal as $B \to \infty$. 
\item If $\Delta(\t)$ and $\Delta'(\t)$ are coprime then the random variables
\begin{align*}
\{ \t \in \Z^n & : \Delta(\t) \neq 0, \| \t \| \leq B\} \to \R,  \\
 \t &\mapsto 
\frac{\#\{ p  : \Jac(C_\t) \text{ has bad semistable reduction at } p \} - c \log \log B}{\sqrt{c \log \log B}}
\end{align*}
converge in distribution to a standard normal as $B \to \infty$. 
\end{enumerate}
\end{theorem}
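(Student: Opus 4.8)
The plan is to reduce both parts to Theorem~\ref{thm:polynomials} (or rather its refinement Theorem~\ref{thm:main}), exactly as in the proof of Theorem~\ref{thm:hyperfamilies}, after identifying the right pair of divisors. The key geometric input is the following classification of bad semistable reduction for the naive model: if $v_p(\Delta(\t)) \geq 1$ but $p \nmid \Delta'(\t)$, then $f_\t \bmod p$ has only double roots (a triple or higher root would force $p \mid \Delta'(\t)$, since the derivative would share a repeated root), and by the same Dedekind/discriminant argument as in Lemma~\ref{lem:disc1} the special fibre of $\mathcal{C}_\t\colon y^2 = f_\t(x,z)$ is semistable — it is a (possibly reducible) nodal curve — so $C_\t$, and hence by Lemma~\ref{lem:DM}(1) also $\Jac(C_\t)$, has semistable reduction at $p$, and it is bad as soon as $p \mid \Delta(\t)$. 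Conversely if $p \mid \Delta(\t)$ then $C_\t$ has bad reduction. Thus, restricted to primes $p \nmid \Delta'(\t)$, the condition ``$\Jac(C_\t)$ has bad semistable reduction at $p$'' is \emph{equivalent} to ``$p \mid \Delta(\t)$ and $p \nmid \Delta'(\t)$''. This is the crucial step and I expect it to be the main obstacle: one must be careful that $\Delta'$ as defined (the discriminant of $f'_\t$) really does detect all the ``worse than nodal'' degenerations of $f_\t \bmod p$, including the cases where $f_\t$ drops degree mod $p$ (leading coefficient $a_{2g+2}(\t) \equiv 0$), which must be folded into the analysis of $\Delta$ and $\Delta'$ at infinity; I would handle the degree-drop by working with the homogenization $f_\t(x,z)$ throughout and noting that a root at infinity of multiplicity $\geq 3$ likewise forces a repeated root of the derivative.

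For part~(1): granting the equivalence above, the counting function is
$$\#\{p : p \mid \Delta(\t),\ p \nmid \Delta'(\t)\} + O(1),$$
where the $O(1)$ absorbs the finitely many primes dividing a fixed auxiliary integer (e.g.\ the set of ``bad'' primes from \eqref{eqn:eff_equ}, or primes where the two models disagree for reasons of bounded contribution — in fact Lemma~\ref{lem:codim_2} shows any codimension-$2$ correction is negligible after renormalisation). I would then apply Theorem~\ref{thm:polynomials} with $h_1 = \Delta(\t)$ — or, to be safe about squarefreeness, with $h_1$ the radical of $\Delta$, which is legitimate since dividing $h_1$ is what matters — and $h_2 = \Delta'(\t)$. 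The hypothesis $(\Delta'(\t)) \not\subseteq \rad(\Delta(\t))$ guarantees that the number $c$ of non-associated irreducible factors of $\Delta$ not dividing $\Delta'$ is positive, which is precisely the $c > 0$ requirement of Theorem~\ref{thm:polynomials}; and $c$ as computed there agrees with $c$ in the statement. The conclusion is then immediate.

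For part~(2): when $\Delta$ and $\Delta'$ are coprime, every prime $p \mid \Delta(\t)$ with $\t$ a point where $\Delta(\t) \neq 0$ satisfies $p \nmid \Delta'(\t)$ with at most $O(1)$ exceptions — more precisely, the primes dividing both $\Delta(\t)$ and $\Delta'(\t)$ lie on the codimension-$\geq 2$ subscheme $\{\Delta = \Delta' = 0\}$, so by Lemma~\ref{lem:codim_2} they contribute a quantity with bounded moments and hence, after division by $\sqrt{\log\log B}$, do not affect the limiting distribution. Therefore in this case ``$\Jac(C_\t)$ has bad semistable reduction at $p$'' and ``$p \mid \Delta(\t)$'' agree up to such a negligible discrepancy, and the claim follows from applying Theorem~\ref{thm:polynomials} (equivalently Theorem~\ref{thm:main}) to $h_1 = \Delta$ and $h_2 = \Delta'$ and then using the codimension-$2$ lemma to discard the excluded primes, exactly as in the deduction of the two parts of Theorem~\ref{thm:main} from its component statements. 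The value of $c$ is unchanged since no irreducible factor of $\Delta$ divides $\Delta'$.
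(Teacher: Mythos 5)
Your proposal is correct and follows essentially the same route as the paper: you rederive the key geometric input (the paper's Lemma \ref{lem:disc}, that $p \mid \Delta(\t)$ and $p \nmid \Delta'(\t)$ force bad semistable reduction, with the same ``at most double roots'' and degree-drop considerations) and then apply Theorem \ref{thm:polynomials} with $h_1$ a generator of $\rad(\Delta)$ and $h_2 = \Delta'$, exactly as in the paper. Your explicit treatment of the discrepancy in part (2) via the codimension-$2$ locus $\{\Delta = \Delta' = 0\}$ and Lemma \ref{lem:codim_2} correctly fills in a step the paper's one-line proof leaves implicit.
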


\begin{proof}
By Lemma \ref{lem:disc}, this follows from Theorem \ref{thm:polynomials} with $h_2 = \Delta'$ and $h_1$ a generator of the ideal $\rad(\Delta)$. 
\end{proof}

\begin{lemma} \label{lem:disc}
Let $\t \in \Z^n$ and suppose $p > 2g+2$. If $p \mid \Delta(\t)$ but $p \nmid \Delta'(\t)$,  then both $C_{\t}$ and $\mathrm{Jac}(C_\t)$ have bad semistable reduction.
\end{lemma}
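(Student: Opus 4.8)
The plan is to analyze the reduction of the hyperelliptic curve $C_\t$ at a prime $p > 2g+2$ under the hypothesis $p \mid \Delta(\t)$, $p \nmid \Delta'(\t)$, and show the reduction is of the mildest possible type: $f_\t$ has only double roots modulo $p$. First I would recall that $\Delta(\t)$, up to a unit, factors as a product of the squared differences of the roots of $f_\t$, so $p \mid \Delta(\t)$ forces $f_\t \bmod p$ to have a repeated root $\alpha$. The key point is to rule out a root of multiplicity $\geq 3$: if $\alpha$ were such a root of $f_\t \bmod p$, then $\alpha$ would be a root of $f'_\t \bmod p$ of multiplicity $\geq 2$, whence $\Delta'(\t) \equiv 0 \bmod p$, contradicting $p \nmid \Delta'(\t)$. (One must be slightly careful at infinity — i.e.\ whether $\deg f_\t$ drops mod $p$, equivalently whether $a_{2g+2}(\t) \equiv 0$ — but a drop in degree is likewise detected by the discriminant, and the condition on $\Delta'$ controls multiplicities of the homogenized polynomial; the hypothesis $p > 2g+2$ ensures no issues arise from the characteristic dividing multiplicities or leading coefficients of derivatives.) Thus all repeated roots of $f_\t \bmod p$ (including possibly at $z=0$) are exactly double.

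Next I would invoke the structure theory already cited in Lemma~\ref{lem:disc1}: for a hyperelliptic curve $y^2 = f_\t(x,z)$ over $\Z_p$ whose mod-$p$ reduction has only nodal (double-root) degeneracies, the homogenized equation gives a model whose special fibre is semistable, with nodes corresponding to the double roots. This is precisely the regime of \cite[8.3.53]{Liu}: the reduction is a semistable curve. Hence $C_\t$ has semistable reduction at $p$, and since $p \mid \Delta(\t)$ means $C_\t$ does not have good reduction, it has bad semistable reduction. Then Lemma~\ref{lem:DM}(1) transfers this to $\Jac(C_\t)$, which therefore also has semistable reduction; and by Lemma~\ref{lem:DM}(2) the toric rank equals (number of nodes) $-$ (number of components) $+ 1 \geq 1$ since there is at least one node, so $\Jac(C_\t)$ has bad reduction too. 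This gives the statement.

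The main obstacle I anticipate is the bookkeeping around the point at infinity and the leading coefficient $a_{2g+2}(\t)$: the discriminant $\Delta$ of $f_\t$ as a degree-$(2g+2)$ polynomial is the natural object, but the genuinely intrinsic curve lives in the weighted projective setting where one should work with the binary form $f_\t(x,z)$ of degree $2g+2$ and its discriminant, and the relationship between vanishing of $\Delta(\t)$, possible degree drop, and the behaviour of $\Delta'(\t)$ needs to be spelled out consistently. The clean way to handle this is to note that the hypothesis is symmetric enough — $\Delta'$ is the discriminant of $f'_\t$, and by a standard identity relating $\disc(f)$, $\disc(f')$, and resultants, a root of multiplicity $\geq 3$ of the homogeneous form $f_\t$ at any point of $\P^1_{\F_p}$ (affine or at infinity) forces $p \mid \Delta'(\t)$. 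Once this is in place the rest is a direct citation of Liu and of Lemmas~\ref{lem:DM} and~\ref{lem:disc1}, and the proof is short. I would keep the argument at the level: "repeated roots are all double $\Rightarrow$ semistable nodal reduction $\Rightarrow$ apply \cite[8.3.53]{Liu} and Lemma~\ref{lem:DM}," deferring the resultant identity to a one-line remark.
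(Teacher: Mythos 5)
Your proposal follows essentially the same route as the paper: use $p \nmid \Delta'(\t) = \disc(f'_\t)$ to rule out roots of multiplicity $\geq 3$ of $f_\t \bmod p$, handle the point at infinity via $p > 2g+2$, conclude that the reduction is nodal and hence semistable, and then transfer to the Jacobian via Lemma~\ref{lem:DM}. One step is stated too quickly: you assert that the toric rank $m - c + 1 \geq 1$ ``since there is at least one node,'' but this inference fails in general when the special fibre has $c = 2$ components (e.g.\ two components meeting at a single node would give toric rank $0$). The paper closes this by a short case analysis: if $\bar f_\t$ is not a square, the special fibre is irreducible ($c=1$) and the toric rank is $m \geq 1$; if $\bar f_\t$ is a square, the two components meet at $m = g+1 \geq 2$ points, so the toric rank is $g \geq 1$. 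With that case distinction supplied, your argument is complete and matches the paper's.
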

\begin{proof}
This is well-known, but we sketch a proof for completeness. We have $p \mid \Delta(\t)$ if and only if $f_\t(x)$ has a root of multiplicity at least two over $\F_p$. Since $p \nmid \Delta'(\t)=\disc(f'_\t)$, all such roots must have multiplicity equal to two. 
Since $\Delta'(\t) = \mathrm{Res}_x(f'_\t, f''_\t)$, the condition $p \nmid \Delta'(\t)$ also implies that $p$ does not divide all the coefficients of $f$.  We may assume the curve $C_{\t} \colon y^2 = f_\t(x)$ over $\F_p$ has no points at infinity. (If $p$ happens to divide the leading coefficient, we change coordinates so that $\infty \in \P^1(\F_p)$ is not a root of the homogenization of $f_\t$ over $\F_p$; the condition $p > 2g+2$ guarantees that this is always possible.) This curve is then smooth aside from singularities \'etale locally of the form $y^2 = x^2g(x)$, where $g$ is non-vanishing at $x=0$. The singularities are therefore  nodes, say $m > 0$ of them. 

If $m < g+1 = \frac12\deg(f_\t)$, then there is one (singular) irreducible component of genus $g - m$, so $C_{\t}$ has bad semistable reduction.  By Lemma \ref{lem:DM}, the toric rank is $m > 0$, so $J_{\t}$ also has bad semistable reduction. If $m = g+1$, then there are two (non-singular) irreducible components crossing at $m \geq 2$ points so again $C_{\it}$ is semistable, and the toric rank is  $g > 0$, so $J_\t$ has bad semistable reduction as well.
\end{proof}

\begin{remark}
See \cite[eq.\ (10)]{BFS} for examples where Theorem \ref{thm:hyperelliptic_weak} applies but Theorem \ref{thm:hyperfamilies} does not. 
\end{remark}

For families of Jacobians with everywhere potentially good reduction, the hypotheses of Theorems \ref{thm:hyperfamilies} and \ref{thm:hyperelliptic_weak} are evidently not satisfied: if $J$ has potentially good reduction at $p$, then it cannot have bad semistable reduction at $p$.

A simple example is the family $y^2 = x^\ell + t$. For every $t$, the Jacobian has potentially good reduction since it has complex multiplication over $\Q(\zeta_\ell)$, but $\Delta(t) = -\ell^\ell t^{\ell-1}$ and $\Delta'(t) = 0$, so both Theorems do not apply.  Similarly, Theorems \ref{thm:hyperfamilies} and \ref{thm:hyperelliptic_weak} do not apply in twist families such as $C_t \colon ty^2 = f(x)$, which necessarily have finitely many primes of bad semistable reduction in the entire family.       

\begin{example}
For a non-isotrivial example, consider the curves $C_t \colon y^2 = f_t(x)$, where
\[f_t= (x^2 + 2x - 2) (x^4 + 4x^3 + (2t^2 - 8)x - t^2 + 4).\]
We have $\Delta(t) = -2^63^6(t^2-4)^2t^{12}$ whereas $\Delta'(t) = -2^83^8(t^2 -4)t^4g(t)$ for some irreducible sextic polynomial $g(t)$, so Theorem \ref{thm:hyperelliptic_weak} does not apply.
The Jacobian of any curve in this family, which is taken from \cite{LS}, has quaternionic multiplication by the quaternion algebra of discriminant $6$, and hence has no primes of bad semistable reduction.  
\end{example}

Is there a geometric characterization of the families of hyperelliptic curves not satisfying the condition $(\Delta'(\t)) \not \subseteq \rad(\Delta(\t))$ of Theorem \ref{thm:hyperelliptic_weak}(1)? All examples that we encountered so far are either isotrivial or have Jacobians with large endomorphism algebra. In particular, they have everywhere potentially good reduction aside from finite many primes which depend only on the family.

\section{Families of plane curves} \label{sec:plane_curves}

Let $V_d$ be the space of homogeneous polynomials $f(x,y,z)$ of degree $d$.  There is a polynomial $\Delta = \Delta_d$ on $V_d$, called the {\it discriminant}, with the property that $\Delta(f) = 0$ if and only if the curve $C_f \colon f(x,y,z) = 0$ is singular \cite[13.1.D]{GKZ}. 

We say $C_f$ has {\it minimally bad reduction} at a prime $p$ if $v_p(\Delta) = 1$. The equation $f(x,y,z) = 0$ then gives a minimal regular model for $C_f$ over $\Z_p$, and the singular locus in the special fiber is a single node \cite[Thm.\ 1.1]{PS}. By Bezout's theorem, the special fiber is irreducible, and it follows that $C_f$ has bad semistable reduction at $p$. The toric rank of $\Jac(C_f)$ is $1$ by Lemma \ref{lem:DM}, so $J$ also has bad semistable reduction at $p$. By \cite[\S 9.6]{BLR} and the irreducibility of the special fiber, we have $c_p(J)= 1$ as well.

\begin{theorem}\label{thm:planecurves}
Let $d \geq 3$. Consider the family 
$$\sum a_{ijk} x^iy^jz^k = 0 $$
of all degree $d$ plane curves over affine $2 +d \choose d$-space.
Then as $B \to \infty$, the random variables
\begin{align*}
\{ \mathbf{a} \in \Z^{2 +d \choose d} & : \Delta(\a) \neq 0, \| \mathbf{a} \| \leq B\} \to \R, \\
 \mathbf{a} &\mapsto 
\frac{\#\{ p  : C_{\mathbf{a}} \text{ has minimally bad reduction at } p\} - \log \log B}{\sqrt{ \log \log B}}
\end{align*}
converge in distribution to a standard normal.
\end{theorem}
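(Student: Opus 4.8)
The plan is to deduce Theorem~\ref{thm:planecurves} from Theorem~\ref{thm:polynomials}, applied to the discriminant polynomial $\Delta = \Delta_d \in \Z[a_{ijk}]$ on the coefficient space $\A^{\binom{d+2}{2}}$, in exact analogy with the hyperelliptic case of Corollary~\ref{cor:standard}. Concretely, I would take $n = \binom{d+2}{2}$ (the number of degree-$d$ monomials in $x,y,z$), set $h_1 = \Delta_d$ and $h_2 = 1$, and then check that the second counting function appearing in Theorem~\ref{thm:polynomials} is precisely $\#\{ p : C_{\mathbf{a}} \text{ has minimally bad reduction at } p\}$, with normalizing constant $c = 1$.

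The one genuine input needed is the irreducibility of $\Delta_d$ as a polynomial in the $a_{ijk}$. Here I would invoke the classical fact that the degree-$d$ plane-curve discriminant is, up to sign and normalized to be primitive over $\Z$, the reduced irreducible defining equation of the dual variety of the $d$-uple Veronese surface $v_d(\P^2) \subset \P^N$ with $N = \binom{d+2}{2} - 1$: the projectivized conormal variety of a smooth projective variety is an irreducible projective bundle over that variety, so its image in the dual projective space is irreducible, and for $d \geq 3$ this image is a hypersurface (of degree $3(d-1)^2$). Hence $\Delta_d$ is absolutely irreducible, in particular irreducible and squarefree over $\Q$ and non-constant; see \cite[Ch.~1 and \S13.1]{GKZ}. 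Consequently the integer $c$ of Theorem~\ref{thm:polynomials} --- the number of non-associated irreducible factors of $h_1 = \Delta_d$ not dividing $h_2 = 1$ --- equals $1$, so the hypothesis $c > 0$ holds and the normalization stated in Theorem~\ref{thm:planecurves} is the correct one.

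It then remains only to match the random variables. The second counting function of Theorem~\ref{thm:polynomials} is $\#\{ p : v_p(\Delta_d(\mathbf{a})) = 1,\ p \nmid h_2(\mathbf{a})\}$, which collapses to $\#\{ p : v_p(\Delta_d(\mathbf{a})) = 1\}$ since $h_2 = 1$; and by the discussion immediately preceding the statement of Theorem~\ref{thm:planecurves} --- the definition of minimally bad reduction together with \cite[Thm.~1.1]{PS}, Bezout's theorem, and Lemma~\ref{lem:DM} --- the condition $v_p(\Delta_d(\mathbf{a})) = 1$ is exactly the statement that $C_{\mathbf{a}}$ has minimally bad reduction at $p$. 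Plugging $c = 1$ into the conclusion of Theorem~\ref{thm:polynomials} then gives convergence to the standard normal. No separate verification of effective equidistribution is required, since it is already incorporated into Theorem~\ref{thm:polynomials} for affine space.

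I expect the only real point requiring care to be the bookkeeping around this irreducibility statement: one must confirm that the discriminant used in \cite{GKZ} (a primitive integral generator of the ideal of the dual variety) coincides, up to sign, with the polynomial $\Delta_d$ whose $p$-adic valuation defines minimally bad reduction, so that no spurious integer factor enters the count of primes with $v_p(\Delta_d(\mathbf{a})) = 1$. This is the standard normalization, so no genuine difficulty arises, but it is the step to state explicitly.
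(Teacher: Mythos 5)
Your proposal is correct and follows essentially the same route as the paper: the paper's proof is a one-line application of Theorem~\ref{thm:polynomials} together with \cite[Thm.~1.1]{PS} and the irreducibility of $\Delta$ in the $a_{ijk}$ from \cite[\S 13.1.D]{GKZ}, exactly as you describe (with $h_2=1$ and $c=1$). Your extra remarks on the dual-variety source of irreducibility and on the integral normalization of $\Delta_d$ are sound but not needed beyond what the cited references already supply.
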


\begin{proof}
This follows from Theorem \ref{thm:polynomials} and \cite[Thm.\ 1.1]{PS}, and the fact that $\Delta$ is an irreducible polynomial in the $a_{ijk}$ \cite[\S 13.1.D]{GKZ}.
\end{proof}
 Just as in Theorem \ref{thm:hyperfamilies}, Theorem \ref{thm:planecurves} generalizes immediately to parameterized families of plane curves $C_{\t}$ over $\A^n$ such that $\Delta(\t)$ is not squarefull.
 
 For more general families, we prove an analogue of Theorem \ref{thm:hyperelliptic_weak}. For $f = \sum a_{ijk} x^iy^jz^k$, let $H_{xy} = f_{xx}f_{yy} - f_{xy}^2$ be the upper left 2-by-2 minor of its Hessian matrix. The resultant $R(f) = \mathrm{Res}(H_{xy}, f_x,f_y)$ is a polynomial in the $a_{ijk}$ which vanishes precisely when $H_{xy},f_x,$ and $f_y$ have a common zero \cite[\S 13]{GKZ}.

\begin{lemma}
$H_{xy}$ vanishes whenever $C_f$ has a non-nodal singularity. Hence so does $R(f)$. 
\end{lemma}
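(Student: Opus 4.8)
The plan is to reduce to a local, purely algebraic statement about the Hessian of a plane curve singularity and then invoke the resultant characterization already recalled before the lemma. First I would fix a geometric point $P$ on $C_f$ which is a singularity of $C_f$, and recall what "non-nodal" means here: $P$ is a singular point (so $f_x, f_y, f_z$ all vanish at $P$, using the Euler relation to see that vanishing of the partials is equivalent to $P$ being singular) whose local intersection multiplicity, or equivalently the Milnor number, is at least $2$; a node is precisely the case where the degree-$2$ part of the local Taylor expansion of $f$ at $P$ is a nondegenerate quadratic form. So a non-nodal singularity is exactly one where the Hessian quadratic form of $f$ at $P$ is degenerate (rank $\leq 1$).

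The key computation is then to relate this intrinsic degeneracy to the $2\times 2$ minor $H_{xy} = f_{xx}f_{yy} - f_{xy}^2$. After a projective change of coordinates I may assume $P = [0:0:1]$, so that working in the affine chart $z = 1$ the point becomes the origin, and $f(x,y,1)$ has vanishing constant and linear terms. Its quadratic part is the binary form with matrix $\begin{pmatrix} f_{xx} & f_{xy} \\ f_{xy} & f_{yy}\end{pmatrix}$ evaluated at $P$, whose determinant is exactly $H_{xy}(P)$. Thus $P$ being non-nodal forces $H_{xy}(P) = 0$. I would also note the compatibility with homogeneity: $H_{xy}$ is a bihomogeneous expression in the $a_{ijk}$ and the coordinates, so the vanishing statement is coordinate-independent once one checks (via the standard transformation law of the Hessian, or directly) that the condition "$H_{xy}$ and $f_x, f_y$ have a common projective zero" is invariant under $\mathrm{PGL}_3$ — or, more cheaply, just argue after moving the singular point to $[0:0:1]$ as above, since the statement to be proved is an implication about the existence of such a point.

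With $H_{xy}(P) = 0$ and $f_x(P) = f_y(P) = 0$ established, the point $P$ is a common projective zero of $H_{xy}, f_x, f_y$, so by the cited property of the resultant $R(f) = \mathrm{Res}(H_{xy}, f_x, f_y)$ \cite[\S 13]{GKZ} we conclude $R(f) = 0$. This gives both assertions of the lemma. The main obstacle I anticipate is bookkeeping around degenerate configurations: the resultant $\mathrm{Res}(H_{xy}, f_x, f_y)$ of three ternary forms vanishes iff they have a common zero in $\P^2$, but one must make sure the relevant degrees are what is expected and that $H_{xy}$ is not identically zero as a form in $x,y,z$ for generic $f$ (so that "$R(f)$ is a polynomial in the $a_{ijk}$" is a meaningful, nonzero statement) — this is the kind of point that is routine to check for $d \geq 3$ but easy to state carelessly. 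A secondary subtlety is the characteristic: over $\F_p$ with $p$ small the identification "degenerate Hessian quadratic form $\Leftrightarrow$ non-nodal" can fail in the usual ways (e.g.\ $p = 2$), so in the intended application one restricts to $p$ large enough, exactly as in Lemma \ref{lem:disc}.
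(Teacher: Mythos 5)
Your local computation at $P=[0:0:1]$ is correct and matches the geometric input the paper uses (a singular point is a node precisely when the quadratic part of the local expansion is a nondegenerate form), but the reduction step ``after a projective change of coordinates I may assume $P=[0:0:1]$'' leaves a genuine gap. The lemma asserts that the \emph{specific} polynomial $H_{xy}=f_{xx}f_{yy}-f_{xy}^2$ vanishes at the \emph{given} point $P$ in the \emph{given} coordinates, since that is what feeds into $R(f)=\mathrm{Res}(H_{xy},f_x,f_y)$. A single $2\times 2$ minor of the Hessian is not a covariant: if $\tilde f=f\circ A$, then $H(\tilde f)(A^{-1}P)=A^{\mathsf T}H(f)(P)A$, which preserves the rank of the full $3\times 3$ Hessian but not the vanishing of any one particular minor. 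So proving $\tilde H_{xy}(\tilde P)=0$ for the moved curve does not yield $H_{xy}(P)=0$ for the original one. Neither of your proposed patches closes this: the $\mathrm{PGL}_3$-invariance of ``$H_{xy},f_x,f_y$ have a common zero'' does not follow from the transformation law of the Hessian (and is doubtful as stated, since $f_x=f_y=0$ does not even single out singular points of $C_f$), and ``just argue after moving the point'' is precisely the step at issue.

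The fix, which is the paper's argument, is to not move $P$ at all: a non-nodal singularity is exactly a singular point where the full $3\times 3$ Hessian $H(f)(P)$ has rank at most $1$ (rank $0$ for a triple point or worse, rank $1$ for a degenerate double point; rank $2$ forces separated tangent lines, i.e.\ a node). This rank condition is a congruence invariant, so it can be verified in adapted coordinates exactly as in your computation, and rank at most $1$ kills \emph{every} $2\times 2$ minor --- in particular $H_{xy}$ --- at $P$ in the original coordinates. Your final step (a common zero of $H_{xy},f_x,f_y$ forces $R(f)=0$) and your caveat about small characteristic are fine.
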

\begin{proof}
Let $H = H(f)$ be the 3-by-3 Hessian matrix of double partial derivatives.  
Suppose $C_f$ is singular at a point $P$. Then $P$ is a triple point (or worse) if and only if the matrix $H(P)$ vanishes identically, i.e. has rank 0.  If $P$ is a double point, then the rank of $H(P)$ is either one or two, and in the latter case $P$ is an ordinary double point (i.e.\ a node) since the tangent lines are separated.  Thus $H_{xy}$ vanishes at all singular points which are not nodes.  
\end{proof}
\begin{proposition}\label{prop:bad}
Let $f \in V_d(\Z_p)$, $\Delta(f) \neq 0$. Suppose $p \mid \Delta(f)$ but $p \nmid R(f)$. Then both $C_f$ and $\Jac(C_f)$ have bad semistable reduction at $p$.  
\end{proposition}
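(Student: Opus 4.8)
The plan is to mirror the structure of the proof of Lemma \ref{lem:disc} in the hyperelliptic case, using the model $\mathcal{C}_f \colon f(x,y,z) = 0$ over $\Z_p$ and analyzing its special fiber. First I would observe that since $p \mid \Delta(f)$, the reduction $\bar f \in \F_p[x,y,z]$ defines a singular plane curve $\bar C = C_{\bar f}$ over $\F_p$. Since $p \nmid R(f) = \mathrm{Res}(H_{xy}, f_x, f_y)$, the three polynomials $\overline{H_{xy}}, \bar f_x, \bar f_y$ have no common zero over $\bar\F_p$; by the preceding lemma, every singular point of $\bar C$ lies on $\{\overline{H_{xy}} = \bar f_x = \bar f_y = 0\}$ once one also accounts for the $z$-partial via the Euler relation $x f_x + y f_y + z f_z = d f$, so in fact $\bar C$ has no non-nodal singularities: every singular point of $\bar C$ is an ordinary node. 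In particular $\bar C$ has only finitely many singular points, all nodes.

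Next I would pass to the arithmetic model $\mathcal{C}_f$ over $\Z_p$. The subtlety, compared to a single node and $v_p(\Delta) = 1$, is that we no longer control the valuation of $\Delta$, so $\bar C$ may have several nodes, and $\mathcal{C}_f$ need not be regular at each of them as an arithmetic surface. The standard fix is to take the minimal proper regular model $\mathcal{C}^{\min}$: by resolving the (at worst nodal) singularities of $\mathcal{C}_f$ fiber-wise, one checks that the special fiber of $\mathcal{C}^{\min}$ is obtained from $\bar C$ by inserting chains of $\P^1$'s at the nodes, and in particular has only nodal singularities — i.e. $\mathcal{C}^{\min}$ has semistable special fiber. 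Here I would cite \cite[10.3.34]{Liu} (as in Remark \ref{rem:stable terminology}) to conclude that $C_f/\Q_p$ has semistable reduction. Since $\Delta(f) \neq 0$, $C_f$ is smooth over $\Q_p$ of genus $g = \binom{d-1}{2} \geq 1$, and since $p \mid \Delta(f)$ the reduction is genuinely bad; hence $C_f$ has bad semistable reduction at $p$.

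Finally, for the Jacobian: by Lemma \ref{lem:DM}(1), $\Jac(C_f)$ has semistable reduction at $p$ if and only if $C_f$ does, which we have just shown. It remains to check the reduction is bad, i.e. that the toric rank $t$ is positive. By Lemma \ref{lem:DM}(2), $t = m - c + 1$ where $m \geq 1$ is the number of nodes and $c$ the number of irreducible components of the special fiber of the semistable model. Since $\bar C$ is a plane curve of degree $d$, Bézout forces it to be either irreducible, or a union of components whose pairwise intersections (themselves nodes of $\bar C$) are numerous enough that $m - c + 1 > 0$; concretely, the dual-graph/genus count shows $t$ equals the first Betti number of the dual graph plus the sum of geometric genus defects, and this is $\geq 1$ precisely because $\bar C$ is singular with $g \geq 1$ — one argues that if $t = 0$ then $\bar C$ would have to be a tree of smooth curves of total genus $g$ with no loops, which is impossible for a plane curve once it is singular (any node of a plane curve either lies on a single component, contributing a loop, or the configuration of lines/conics through it creates one). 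This combinatorial positivity step is the main obstacle; the cleanest route is probably to invoke the fact that a plane curve with at least one node has a minimal regular model whose special fiber is not a tree of smooth rational-plus-higher-genus curves, equivalently that $\Jac$ of a nodal plane curve always acquires toric part — which also follows from the semicontinuity of the abelian rank combined with the fact that $\bar C$ is not smooth. Once $t \geq 1$ is established, $\Jac(C_f)$ has bad (semistable) reduction at $p$, completing the proof.
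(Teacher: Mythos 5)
Your proof follows the same broad outline as the paper's (nodal special fibre $\Rightarrow$ semistability $\Rightarrow$ positive toric rank), but two steps do not close as written. First, the semistability step: the definition of a semistable curve used in this paper has two parts — at worst nodal singularities \emph{and} no irreducible component isomorphic to $\P^1$ meeting the rest of the curve in a single point. Your passage to the minimal regular model only addresses the first part; the second is not automatic and is precisely where the paper invokes Bezout (a component of degree $d_i$ meets the complementary degree-$(d-d_i)$ curve transversely, hence in $d_i(d-d_i)\ge 2$ distinct nodes once $d\ge 3$). The paper also avoids $\mathcal{C}^{\min}$ entirely — since semistable reduction only requires \emph{some} proper model with semistable special fibre, the plane model $f=0$ over $\Z_p$ suffices — and it records explicitly that $p\nmid R(f)$ forces $\bar f$ to be squarefree, so the special fibre is reduced; your write-up leaves reducedness implicit (it does follow from ``all singularities are nodes,'' but this should be said).

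Second, and more seriously, the positivity of the toric rank, which you rightly flag as ``the main obstacle'': neither of your proposed resolutions is a proof. The combinatorial one (``the configuration of lines/conics through it creates one [loop]'') is only a gesture — a node where two distinct components meet contributes a single edge of the dual graph and creates no loop by itself; the loop exists only because Bezout forces those components to meet again elsewhere, and that quantitative input is missing. The fallback (``semicontinuity of the abelian rank combined with the fact that $\bar C$ is not smooth'') is false as a general principle: the paper's own remark after Lemma \ref{lem:DM} gives a genus-$2$ curve degenerating to two elliptic curves glued at one node, which is singular yet has toric rank $0$. Positivity genuinely uses that $\bar C$ is a \emph{plane} curve. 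The paper's argument is short: the abelian rank is at most $\frac12\sum_{i}(d_i-1)(d_i-2)$, which is strictly less than $g=\frac12(d-1)(d-2)$ whenever there are $r\ge 2$ components (strict superadditivity of $\frac12(d-1)(d-2)$ for $d\ge 3$), while for $r=1$ the presence of at least one node makes the geometric genus, hence the abelian rank, strictly less than $g$. Since the reduction is semistable, the unipotent rank vanishes and the toric rank must therefore be positive. You should replace your sketch with this (or an equally rigorous) count.
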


\begin{proof}
By assumption $C_f$ is proper over $\Z_p$ with only nodal singularities in the special fibre. It therefore has a semistable model over $\Z_p$ \cite[\href{https://stacks.math.columbia.edu/tag/0CDG}{Lemma 0CDG}]{Stacks}. In fact, we claim that $C_f$ is itself semistable over $\Z_p$. Assume otherwise. Write $\bar{f} = \prod f_i$, with $f_i\in \F_p[x,y,z]$ irreducible.  The condition $p \nmid R(f)$ implies that $f_i \neq f_j$ for $i \neq j$, in other words the reduction $C_{f,p}/\F_p$ is reduced, with irreducible components $C_i = \{f_i = 0\}$, for $i = 1,\ldots, r$.  
We may assume $r > 1$. (If $r = 1$, then $C_{f,p} = C_1$ has only nodal singularities and hence is semistable.) As $C_f$ is not semistable, at least one of the $C_i$ has genus 0 intersecting the rest of the special fibre in a single reduced point. Since $d \geq 3$, this cannot happen by Bezout's theorem.   

By Remark \ref{rem:stable terminology}, 
 $C$ has bad semistable reduction.
To prove that $J$ also has {\it bad} semistable reduction, we need to show that the toric rank of $J$ is non-zero, or equivalently, that the abelian rank is strictly less than $g = (d-1)(d-2)/2$.  However, from the above semistable model we see that the abelian rank is at most $g' = \frac12\sum_{i = 1}^r (d_i - 1)(d_i-2)$, where $d_i = \deg(f_i)$ and $\sum d_i = d$.  Since $g' < g$ if $r \neq 1$, we may assume that $C_{f,p}$ is irreducible of degree $d$ and with $t \geq 1$ nodes.  But then the abelian rank of $J$ is $\frac12(d-1)(d-2) - t < g$, as claimed.
\end{proof}

For simplicity, we state only the analogue of Theorem \ref{thm:hyperelliptic_weak}.(2) in this setting.

\begin{theorem}\label{thm:planecurves-weak}
Let $a_{ijk}(\t) \in \Z[t_1,\ldots,t_n]$, and consider the family $C_\t \colon f_\t(x,y,z) = 0$ of degree $d$ plane curves over $\Q$, where $f_\t =  \sum_{ijk} a_{ijk}(\t)x^iy^jz^k$. Assume that $\Delta(f_\t)$ and $R(f_\t)$ are non-constant and coprime. Let $c$ be the number of non-associated irreducible factors of $\Delta(\t)$.  Then as $B \to \infty$, 
the random variables
\begin{align*}
\{ \mathbf{t} \in \Z^{2 +d \choose d} & : \Delta(f_\t) \neq 0, \| \mathbf{t} \| \leq B\} \to \R, \\
 \t &\mapsto 
\frac{\#\{ p  : C_{\t} \text{ has bad semistable reduction at } p\} - c\log \log B}{\sqrt{ c\log \log B}}
\end{align*}
converge in distribution to a standard normal.
\end{theorem}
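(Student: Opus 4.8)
The plan is to derive Theorem~\ref{thm:planecurves-weak} from Theorem~\ref{thm:polynomials} in exactly the way that Theorem~\ref{thm:hyperelliptic_weak}(2) is derived there: I will show that the number of primes of bad semistable reduction coincides, up to an error with bounded moments, with a squarefree--divisor counting function of the type appearing in Theorem~\ref{thm:polynomials}, and then conclude by the same device used in the proof of Theorem~\ref{thm:main}, whereby dividing a quantity with bounded moments by $\sqrt{\log\log B}$ kills it in the limit.

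First I would choose the polynomials to feed into Theorem~\ref{thm:polynomials}. Let $h_1\in\Z[\t]$ be a primitive squarefree polynomial having the same irreducible factors (up to associates) as $\Delta(f_\t)$; then $h_1$ is non-constant with exactly $c$ irreducible factors, $\{h_1=0\}=\{\Delta(f_\t)=0\}$ as subsets of $\Z^n$, and there is a fixed finite set $S$ of primes with $p\mid h_1(\t)\Leftrightarrow p\mid\Delta(f_\t)$ for all $\t\in\Z^n$ and all $p\notin S$. Let $h_2:=R(f_\t)\in\Z[\t]$. Since $\Delta(f_\t)$ and $R(f_\t)$ are coprime, no irreducible factor of $h_1$ divides $h_2$, so the number of non-associated irreducible factors of $h_1$ not dividing $h_2$ is exactly $c$. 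Applying Theorem~\ref{thm:polynomials} with $\mathcal X=\A^n_\Z$ (whose effective equidistribution property is classical and already underlies that theorem), and using that $\#\{p\mid h_1(\t):p\nmid h_2(\t)\}$ and $\#\{p\mid\Delta(f_\t):p\nmid R(f_\t)\}$ differ by $O(1)$ uniformly in $\t$, we get that
$$\frac{\#\{p\mid \Delta(f_\t) : p\nmid R(f_\t)\}-c\log\log B}{\sqrt{c\log\log B}}$$
converges in distribution to a standard normal over $\{\t\in\Z^n:\Delta(f_\t)\neq 0,\ \|\t\|\leq B\}$.

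The main step is to compare $\omega_{\mathrm{bss}}(\t):=\#\{p:C_\t\text{ has bad semistable reduction at }p\}$ with $\#\{p\mid\Delta(f_\t):p\nmid R(f_\t)\}$, for $\t$ with $\Delta(f_\t)\neq 0$. For the lower bound I would invoke Proposition~\ref{prop:bad}: every $p$ with $p\mid\Delta(f_\t)$ and $p\nmid R(f_\t)$ is already a prime of bad semistable reduction. For the reverse inequality I would note that if $p\nmid\Delta(f_\t)$ then $p$ cannot divide every coefficient $a_{ijk}(\t)$ (otherwise it would divide both $\Delta(f_\t)$ and $R(f_\t)$, these being homogeneous of positive degree in the $a_{ijk}$), so $f_\t$ is primitive over $\Z_p$, the naive model $\{f_\t=0\}\subset\P^2_{\Z_p}$ is smooth and proper, and $C_\t$ has good reduction at $p$; hence every prime of bad semistable reduction divides $\Delta(f_\t)$. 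Splitting these primes according to whether they divide $R(f_\t)$ and using Proposition~\ref{prop:bad} to account for those with $p\nmid R(f_\t)$, we obtain
$$0\leq \omega_{\mathrm{bss}}(\t)-\#\{p\mid\Delta(f_\t):p\nmid R(f_\t)\}\leq \#\{p: p\mid\Delta(f_\t)\text{ and }p\mid R(f_\t)\}=\omega_{\mathcal Z}(\t),$$
where $\mathcal Z\subset\A^n_\Z$ is the closed subscheme $\Delta(f_\t)=R(f_\t)=0$. As $\Delta(f_\t)$ and $R(f_\t)$ are coprime, no component of the hypersurface $\{\Delta(f_\t)=0\}$ lies in $\{R(f_\t)=0\}$, so $\mathcal Z$ has codimension at least $2$; by Lemma~\ref{lem:codim_2} the moments of $\omega_{\mathcal Z}$ are bounded, hence $\omega_{\mathcal Z}(\t)/\sqrt{\log\log B}\to 0$ in distribution, and adding this (together with the uniform $O(1)$ above) to the convergence from the previous paragraph completes the proof.

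Most of this is routine given what precedes: the reduction theory is packaged in Proposition~\ref{prop:bad} and its preliminary lemmas, and the passage through $h_1$ is standard bookkeeping. The one step that genuinely needs care is pinning down the discrepancy set, i.e.\ identifying \emph{all} the primes at which $\omega_{\mathrm{bss}}(\t)$ can differ from the clean polynomial count. Unlike the hyperelliptic situation — where $\Delta'$ plays a closely analogous role but essentially cuts out exactly the non-semistable primes — here the resultant $R(f_\t)$ can vanish ``spuriously'' (at a common zero of $H_{xy},f_x,f_y$ that is not a singular point of $C_f$), so there really may be primes of bad semistable reduction dividing $R(f_\t)$; the crucial observation is that every such prime also divides $\Delta(f_\t)$, so that they all lie on the codimension-$2$ locus $\mathcal Z$ and are harmless by Lemma~\ref{lem:codim_2}. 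This is precisely the plane-curve analogue of the role played by Lemma~\ref{lem:disc} in the proof of Theorem~\ref{thm:hyperelliptic_weak}.
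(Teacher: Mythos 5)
Your proof is correct and follows essentially the same route as the paper's: apply Theorem \ref{thm:polynomials} with $h_1$ a generator of $\rad(\Delta(f_\t))$ and $h_2 = R(f_\t)$, invoking Proposition \ref{prop:bad} to identify the primes being counted. You are in fact more explicit than the paper's one-line proof about the step you flag as needing care --- that primes of bad semistable reduction which do divide $R(f_\t)$ still divide $\Delta(f_\t)$, hence lie on the codimension-two locus $\{\Delta = R = 0\}$ and are absorbed by the moment bound of Lemma \ref{lem:codim_2}, exactly as in the proof of Theorem \ref{thm:main}.
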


\begin{proof}
By Proposition \ref{prop:bad} it is enough to apply Theorem \ref{thm:polynomials}, using  $h_2 = R(f_\t)$ and $h_1$ a generator of the ideal $\rad(\Delta(f_\t))$. 
\end{proof}

\begin{remark}
Over $V_d$, the polynomials $\Delta$ and $R$ have no common factors: Since $\Delta$ is irreducible it is enough to exhibit a single $f \in V_d$ with $\Delta(f) = 0$ and $R(f) \neq 0$.  Consider the curve $C_f \colon x^d + y^d =xyz^{d-2}$, which has a node at the origin.  The resultant 
\[R(f) = \mathrm{Res}(d^2(d-1)^2x^{d-2}y^{d-2} - z^{2(d-2)}, dx^{d-1} - yz^{d-2}, dy^{d-1} - xz^{d-2}),\]
is non-vanishing, since the scheme cut out by these three polynomials is empty.  
\end{remark}

The results of this section generalize immediately to Erd\H{o}s--Kac type results for reduction types of degree $d$ hypersurfaces $H \colon h(\x) = 0$ in $\P^n$. Indeed, there is a discriminant polynomial $\Delta$ for such hypersurfaces \cite[13.1.D]{GKZ}. Moreover, $v_p(\Delta(h)) = 1$ implies that $H \otimes \Z_p$ is regular with a unique singular point (a node) in its special fiber \cite[Thm.\ 1.1]{PS}, hence $H$ has semistable reduction over $\Q_p$ in that case.

\end{document}